\documentclass{amsart}

\usepackage{enumerate}
\usepackage{xcolor}

\newtheorem{theorem}{Theorem}[section]
\newtheorem{lemma}[theorem]{Lemma}
\newtheorem{proposition}[theorem]{Proposition}

\theoremstyle{definition}

\theoremstyle{remark}
\newtheorem{remark}[theorem]{Remark}

\numberwithin{equation}{section}

\begin{document}

% \title[short text for running head]{full title}
\title[Equivalence of solutions]{The equivalence of solutions to fractional and logarithmic Helmholtz equations}

%    Only \author and \address are required; other information is
%    optional.  Remove any unused author tags.

%    author one information
% \author[short version for running head]{name for top of paper}
\author[A.~Biswas]{Animesh Biswas}
\address{Department of Mathematics \\
Missouri State University \\ 
901 S. National Ave.\\
Springfield, MO 65897 \\
United States of America}
\curraddr{}
\email{ab7e@MissouriState.edu}
\thanks{}

%    author two information
\author[J.~Groszkiewicz]{Joseph Groszkiewicz}
\address{Department of Mathematics \\
Iowa State University \\
396 Carver Hall \\
Ames, IA 50011 \\
United States of America}
\curraddr{}
\email{josephg3@iastate.edu}
\thanks{}

%    author two information
\author[P.~R.~Stinga]{Pablo Ra\'ul Stinga}
\address{Department of Mathematics \\
Iowa State University \\
396 Carver Hall \\
Ames, IA 50011 \\
United States of America}
\curraddr{}
\email{stinga@iastate.edu}
\thanks{Research supported by Simons Foundation grant MP-TSM-00002709}

%    The 2020 edition of the Mathematics Subject Classification is
%    the current definitive version.
\subjclass[2010]{Primary: 26A33, 35R11. Secondary: 47D06}

\date{}

\begin{abstract}
We show the following equivalence of solutions to general fractional Helmholtz equations: $u$ is a solution to 
$$A^su=\lambda^su$$
for \emph{some} fixed $0<s<2$ and some $\lambda>0$ if and only if $u$ is a solution to the same equation for \emph{all} $s\neq0$. Furthermore, we show as well that $u$ equivalently solves the logarithmic Helmholtz equation 
$$\log(A)u=(\log\lambda)u.$$
Here, $A$ is a nonnegative, linear operator on a Banach space $X$. In particular, our results hold whenever $A=-L$, where $L$
is the infinitesimal
generator of a $C_0$-semigroup in a Banach space $X$ satisfying mild assumptions that are typical in applications.

As a particular case, we recover known results for the fractional Laplacian $(-\Delta)^s$ in $\mathbb{R}^n$. More importantly, we provide a list of examples of other fractional power operators in settings where the Fourier transform is not available and for which our theorems apply. These include fractional powers and logarithms of second order elliptic operators in bounded domains, Laplace--Beltrami operators on Riemannian manifolds, space-time master equations, the discrete Laplacian, and fractional derivatives.
\end{abstract}

\maketitle

\section{Introduction}

Nonlocal equations of fractional order are central in many areas of pure and applied mathematics, including long-range interaction models, anomalous diffusions, mathematical models with memory effects and fractional calculus, see, for instance, \cite{Allen-Caffarelli-Vasseur, ACM, BRR, Caffarelli-Silvestre-Master, CHM, DL, GO2, GO, Stinga-Torrea-SIAM}.

It can be said that the foundational works of Caffarelli and Silvestre \cite{Caffa-Silv, Caffarelli-Silvestre-Master, CaffarelliSilvestre2009Regularity, CaffarelliSilvestre2011EvansKrylov, caffarelli2011regularity} have been the major force in the flourishing of the research activity on nonlocal equations in analysis, partial differential equations, numerical analysis, geometry and inverse problems, among other fields.

In recent years, another line of interesting research in analysis has been growing around nonlocal problems of ``order zero'' involving the logarithm $\log(-\Delta)$
of the Laplace operator $\Delta$ in the whole space $\mathbb{R}^n$, see \cite{boyadzhiev1994logarithms, CHEN2024110470, Chen-Weth, dyda2026dirichlet, hughes1980logarithm, JAROHS2020108732, nollau1969logarithmus, weilenmann1978continuity, yoshikawa1973logarithm} for new and classical results. This operator has been defined in the recent literature as the derivative of the fractional Laplacian $(-\Delta)^s$ with respect to the power $s$ evaluated at $s=0$, see \cite{Chen-Weth}.

It has been noted that, for the fractional Laplacian, regular solutions $u$ to the fractional Helmholtz equation $(-\Delta)^su=u$ in $\mathbb{R}^n$ for $0<s<1$ are given by the solutions to the classical Helmholtz equation $-\Delta u=u$ in $\mathbb{R}^n$, and that this result also extends to $1<s\leq 2$ and $s\in\mathbb{N}$, see \cite{Guan-Murugan} and also \cite{fall2016liouville}. This characterization was later extended to all $s>0$ in \cite{cheng2022equivalence}. The proofs of these results are based on the Fourier transform and the Caffarelli--Silvestre extension problem of \cite{Caffa-Silv}. In addition, it has been observed in \cite{lee2026fundamental} that a solution to the fractional Laplacian Helmholtz equation is also a solution to the logarithmic Laplacian equation $\log(-\Delta)u=0$. Characterizations of solutions to the equation $\Delta u+k^2u=0$ were previously studied for $k$ a real number by Agmon and H\"ormander \cite{MR466902}, and for $k$ a complex number by Agmon \cite{MR1039339}.

It turns out that, as we show in this paper, these equivalences of solutions are particular cases of a much more general property of fractional and logarithmic Helmoltz equations, and are independent of the availability of the Fourier transform.

In other words, we show that if $A$ is a nonnegative, linear operator on a Banach space $X$ then the statement
$$A^su=\lambda^su\qquad\hbox{for some}~0<s<2$$
and some $\lambda>0$, is equivalent to
$$A^su=\lambda^su\qquad\hbox{for all}~s>0.$$
Furthermore, we prove that, whenever $A^{-s}$
can be defined, the equivalence is also true with $-s$ in place of $s$. See Theorem \ref{thm:main theorem_eigen}.

We also relate these results to the equivalence of solutions with respect to the logarithmic equation
$$\log(A)u=(\log\lambda)u.$$
The logarithm of a nonnegative operator $A$ was first considered by Nollau \cite{nollau1969logarithmus}. Here, following \cite{nollau1969logarithmus} and \cite{martinez2001fractional},
we give two definitions of the logarithm of $A$. The first definition is given as the derivative of the positive fractional power operator $A^s$ with respect to $s$ evaluated at $s=0$. In this case, we can show that $A^su=\lambda^su$ implies that $\log(A)u=(\log\lambda)u$. Second, when $A$ is also one-to-one,
we define the logarithm of $A$ as the infinitesimal generator of the semigroup of the negative powers $\{A^{-s}\}_{s\geq0}$. We show that if $A$ is invertible then $A^{-s}u=\lambda^{-s}u$ if and only if $\log(A)u=(\log\lambda)u$. For these results, see Theorem \ref{thm:log1}.
In the case when $A=-L$, for $L$ the infinitesimal generator of a $C_0$-semigroup, we compare both definitions and obtain semigroup formulas for $\log(-L)$, see Propositions \ref{prop:logs} and \ref{prop:log-s}.

We then list a series of examples where our results apply. These include cases when $0$ is in the resolvent of the operator $L$ (so that $(-L)^{-s}$ can be defined) such as fractional elliptic operators in bounded domains, Laplace--Beltrami operators on closed Riemannian manifolds, and nonlocal space-time master operators based on bounded space domains;
as well as cases when $0$ is not in the resolvent set of $L$, such as the Laplacian on $\mathbb{R}^n$, the heat operator in the whole space, the discrete Laplacian, and the (left) derivative operator. The examples are given in Section \ref{section:examples}.

Our proofs rely on the definition and properties of fractional powers of nonnegative operators. Unlike previous works for the Laplacian \cite{cheng2022equivalence, Guan-Murugan}, we do not use the extension problem, although it is available in the literature for fractional powers of generators of $C_0$-semigroups in Banach spaces, see \cite{Gale, B-S}. However, we use the method of semigroups only when considering semigroup formulations for $\log(-L)$. The method of semigroups has been very fruitful in fractional nonlocal problems, including the analysis and application of the extension problem \cite{ B-S-1, B-DLC-S, B-S, Stinga-Torrea-SIAM}, numerical analysis \cite{Antil_Control, bonito2018numerical, Nocheto}, inverse problems \cite{Ghosh02122017}, fractional calculus \cite{STINGA2020111505}, just to mention a few.

Throughout this paper, $X$ is a Banach space with norm $\|\cdot\|_X$ and operator norm $\|\cdot\|$. We let $A:D(A)\subset X\to R(A)\subset X$ be a linear operator with domain $D(A)$, range $R(A)$, resolvent set $\rho(A)=\{\mu\in\mathbb{C}:(\mu I+A)^{-1}~\hbox{is bounded on}~X\}$, where $I$ denotes the identity operator on $X$, and spectrum $\sigma(A)=\mathbb{C}\setminus\rho(A)$. We say that $A$ is \textbf{nonnegative} if $(-\infty,0)$ is in the resolvent set $\rho(A)$ of $A$ and
$$\sup_{\mu>0}\|\mu(\mu I+A)^{-1}\|<\infty.$$
It turns out that if $A$ is one-to-one then $A^{-1}$ is also nonnegative. See \cite{martinez2001fractional} for more on nonnegative operators.

We also say that a family $\{S_t\}_{t\geq0}$ of bounded linear operators on $X$ is a \textbf{$C_0$-semigroup}
if $S_0=I$, $S_{t_1}\circ S_{t_2}=S_{t_1+t_2}$ for all $t_1,t_2\geq0$ and $\lim_{t\to0}S_tu=u$
for all $u\in X$. 
For a $C_0$-semigroup $\{S_t\}_{t\geq0}$, we define its infinitesimal generator $L$ as the linear operator given by
\begin{equation}\label{eq:infinitesimal}
Lu=\lim_{t\to0}\frac{S_tu-u}{t}
\end{equation}
with domain $D(L)=\{u\in X:\eqref{eq:infinitesimal}~\hbox{exists in}~X\}$. In this case, and from now on, we will write
$$S_t=e^{tL}.$$
Then $L$ is a closed operator and $D(L)$ is dense in $X$. Moreover, if $u\in D(L)$ then
$v(t)=e^{tL}u$ is the unique classical solution
$v\in C([0,\infty);X)\cap C^1((0,\infty);D(L))$
to the abstract heat equation
$$\begin{cases}
    \partial_tv=Lv&\hbox{for}~t>0\\
    v(0)=u.
\end{cases}$$
The semigroup $\{e^{tL}\}_{t\geq0}$ is said to be \textbf{uniformly bounded} if there is a constant $M>0$ such that $\|e^{tL}u\|_X\leq M\|u\|_X$ for all $u\in X$, $t\geq0$. In this case, it follows that $-L$ is a nonnegative operator. Finally, the semigroup $\{e^{tL}\}_{t\geq0}$ is \textbf{exponentially stable} if there are constants $M,\varepsilon>0$ such that $\|e^{tL}u\|_X\leq Me^{-\varepsilon t}\|u\|_X$, for all $u\in X$ and all $t\geq0$. See \cite{pazy1983semigroups} for more about semigroups.

Throughout the paper, $\Gamma$ denotes the classical Gamma function.

\section{Fractional Helmholtz equations}

In this section, $A$ is a nonnegative, linear operator on a Banach space $X$.

For the notions about nonnegative operators, their fractional powers and properties that will be introduced next, we follow \cite{MR930604} and \cite{martinez2001fractional}.

We define the \textbf{positive fractional powers} $A^s$ of a nonnegative operator $A$ for $s>0$. If $A$ is a bounded operator then $A^s=J^s$, where $J^s$ is the Balakrishnan operator defined as follows.
\begin{itemize}
    \item If $0<s<1$, $D(J^s)=D(A)$ and
    $$J^su=\frac{\sin(s\pi)}{\pi}\int_0^\infty \mu^{s-1}(\mu I+A)^{-1}Au\,d\mu.$$
    \item If $s=1$, $D(J^s)=D(A^2)$ and
    $$J^su=\frac{\sin(s\pi)}{\pi}\int_0^\infty \mu^{s-1}\bigg[(\mu I+A)^{-1}-\frac{\mu}{\mu^2+1}\bigg]Au\,d\mu+\sin(s\pi/2)Au.$$
    \item If $n<s<n+1$, $n\in\mathbb{N}$, $D(J^s)=D(A^{n+1})$ and
    $$J^su=J^{s-n}A^nu.$$
    \item If $s=n+1$, $n\in\mathbb{N}$, $D(J^s)=D(A^{n+2})$ and $$J^su=J^{s-n}A^nu.$$
\end{itemize}
The integrals above are understood in the sense of Bochner.

Next, if $A$ is unbounded and $0\in\rho(A)$ then we define $A^s=[(A^{-1})^s]^{-1}$. Finally, if $A$ is unbounded and $0\in\sigma(A)$ then $A^su=\lim_{\varepsilon\to0^+}(A+\varepsilon I)^su$ with the domain $D(A^s)$ being the set of all $u\in D((A+\varepsilon I)^s)$ for
all $\varepsilon>0$ close to zero for which the limit exists in $X$.

It turns out that $A^s$ is a closed linear operator, $A^n$ coincides with the $n$-times composition $A\circ\cdots\circ A$ of $A$, $n\in\mathbb{N}$, and satisfies the semigroup property $A^{s_1+s_2}=A^{s_1}\circ A^{s_2}$ for $s_1,s_2>0$. In particular,
$D(A^{s_0})\subset D(A^s)$ for all $0<s<s_0$. Finally, if $0<s<1$ then $A^s$ is also nonnegative and $(A^s)^r=A^{sr}$ for all $r>0$.

If $A$ is bounded then $A^s$ is also bounded and the function $s\mapsto A^su$ is continuous, for every $u\in X$.

Now we define the \textbf{negative fractional powers} of nonnegative operators $A$. If $A$ is one-to-one, so is $A^s$ and the inverse $(A^s)^{-1}$ of $A^s$ is given by
$$A^{-s}:=(A^{-1})^s.$$

Fix $s_0>0$. If $u\in D(A^{s_0})\cap R(A^{s_0})$ then the function $s\mapsto A^su$ is continuous for $-s_0<s<s_0$. If $u\in R(A^{s_0})$ then the function $s\mapsto A^su$ is continuous for $-s_0<s<0$.

The main result of this section is the equivalence of solutions for positive and negative fractional Helmholtz equations.

\begin{theorem}[Equivalence for fractional Helmholtz equations]\label{thm:main theorem_eigen}
    Let $A$ be a nonnegative operator on $X$. Let $\lambda>0$ and $u\in X$. The following are equivalent.
    \begin{enumerate}
    \item[$(1)$] $A^su=\lambda^su$ for some $0<s<2$.
    \item [$(2)$]$A^su=\lambda^su$ for all $s>0$.
    \end{enumerate}
If, in addition, $A$ is one-to-one, then $(1)$ and $(2)$ are also equivalent to the following two statements.
    \begin{enumerate}
    \item[$(3)$] $A^{-s}u=\lambda^{-s}u$ for some $0<s<2$.
    \item[$(4)$] $A^{-s}u=\lambda^{-s}u$ for all $s>0$.
    \end{enumerate}
\end{theorem}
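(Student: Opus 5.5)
The plan is to reduce everything to the eigenvalue statement $Au=\lambda u$. Once we know $u\in D(A)$ and $Au=\lambda u$, all fractional powers follow from the explicit Balakrishnan formulas.

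\emph{Step 1: $Au=\lambda u$ implies (2).} Suppose $u\in D(A)$ and $Au=\lambda u$. Then $(\mu I+A)^{-1}u=(\mu+\lambda)^{-1}u$ for every $\mu>0$, and by induction $u\in D(A^n)$ with $A^nu=\lambda^nu$.
\begin{itemize}
\item If $A$ is bounded and $0<s<1$, the Balakrishnan formula gives
$$J^su=\frac{\sin(s\pi)}{\pi}\int_0^\infty\frac{\mu^{s-1}\lambda}{\mu+\lambda}\,d\mu\,u=\lambda^su,$$
using the classical integral $\int_0^\infty \mu^{s-1}(\mu+\lambda)^{-1}\,d\mu=\lambda^{s-1}\pi/\sin(s\pi)$. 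The case $s=1$ is consistent with $A^1=A$. For $s>1$ we use $J^su=J^{s-n}A^nu$.
\item If $A$ is unbounded and $0\in\rho(A)$, then $A^{-1}u=\lambda^{-1}u$, so $(A^{-1})^su=\lambda^{-s}u$. Inverting gives $A^su=\lambda^su$.
\item If $0\in\sigma(A)$, then $(A+\varepsilon I)u=(\lambda+\varepsilon)u$, so $(A+\varepsilon I)^su=(\lambda+\varepsilon)^su\to\lambda^su$ as $\varepsilon\to0^+$.
\end{itemize}
Negative powers are handled the same way: if $A$ is one-to-one, $A^{-1}u=\lambda^{-1}u$ and $A^{-1}$ is nonnegative, so $(A^{-1})^su=\lambda^{-s}u$ for all $s>0$. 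This gives (4), and (2)$\Rightarrow$(1) and (4)$\Rightarrow$(3) are trivial.

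\emph{Step 2: (1) implies $Au=\lambda u$.} This is the main obstacle. Set $B=A^{s/2}$, which is nonnegative when $s/2<1$; this is exactly why the hypothesis $0<s<2$ appears. Let $\nu=\lambda^{s/2}$. By the semigroup property $B^2=A^s$, so $(B^2-\nu^2)u=0$. Let $w=(B-\nu)u$, which makes sense because $u\in D(A^s)\subset D(B)$. Then $(B+\nu)w=0$.

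Since $-\nu\in(-\infty,0)\subset\rho(B)$, the operator $B+\nu I$ is invertible, hence $w=0$ and $Bu=\nu u$. One should double-check that $(B+\nu)(B-\nu)u=B^2u-\nu^2u$ on $D(B^2)$; this follows from linearity.

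Now apply Step 1 to $B$: $B^ru=\nu^ru$ for all $r>0$. Using $(A^{s/2})^r=A^{sr/2}$, valid since $s/2<1$, we get $A^tu=\lambda^tu$ for every $t>0$, in particular for $t=1$. So (1) implies (2) directly.

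\emph{Step 3: (3) implies the rest.} The same argument applies to the nonnegative operator $A^{-1}$ with eigenvalue $\lambda^{-1}$, since $A^{-s}=(A^{-1})^s$. It yields $A^{-1}u=\lambda^{-1}u$. Then $u=\lambda^{-1}Au$ with $u\in R(A^{-1})=D(A)$, so $Au=\lambda u$ and Step 1 gives (2) and (4).

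The delicate points are two. First, the domain bookkeeping in Step 2, namely that $u\in D(A^s)$ really lies in $D(B^2)$ with $B^2=A^s$. Second, the identity $(A^{s/2})^r=A^{sr/2}$ in the case $0\in\sigma(A)$.
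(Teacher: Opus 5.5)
Your proof is correct, but after the first step it takes a different route from the paper.

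\textbf{Common step.} Both arguments start from the same half-power trick (Lemma~\ref{lem:half_power_eigen} in the paper, your Step 2). Nonnegativity of $A^{s/2}$ turns $A^su=\lambda^su$ into $A^{s/2}u=\lambda^{s/2}u$, and this is where the restriction $s<2$ enters. In your version, $s/2<1$ is also what makes the rule $(A^{s/2})^r=A^{sr/2}$ available.

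\textbf{The paper's continuation.} It iterates the halving to get $A^{s2^{-k}}u=\lambda^{s2^{-k}}u$. The additive semigroup law then gives all dyadic multiples of $s$. Continuity of $r\mapsto A^ru$ passes to the limit and covers every $0<\alpha<\min\{s,1\}$. Finally it squares and doubles to reach every positive exponent (Lemmas~\ref{lem:half_power_eigen}--\ref{lem:alphalessthans} and the proof of the theorem).

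\textbf{Your continuation.} You halve only once. You then evaluate every power of $B=A^{s/2}$ on its eigenvector directly from the definition: the Balakrishnan integral when $B$ is bounded, inversion when $0\in\rho(B)$, and $\varepsilon$-regularization when $0\in\sigma(B)$. You transfer back to $A$ with $(A^{s/2})^r=A^{sr/2}$.

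\textbf{Trade-offs.} Your route needs no limiting argument in the exponent. It also yields the clean intermediate fact that all four statements are equivalent to the plain eigenvalue equation $Au=\lambda u$. The cost is reliance on the multiplicative rule $(A^{\alpha})^r=A^{\alpha r}$ for $0<\alpha<1$ and all $r>0$. This is a less elementary result of the Mart\'{\i}nez--Sanz theory than the additive law and continuity the paper uses, and you also have to unwind the three-case definition of $A^s$.

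\textbf{Your flagged points.} Both delicate points are covered by facts the paper quotes in Section~2. The first is $A^{s/2}\circ A^{s/2}=A^s$ as an operator identity, domains included. The second is the multiplicative rule, which is stated with no restriction on $\sigma(A)$.

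\textbf{Negative powers.} Your Step 3 reruns the argument on $A^{-1}$, which works. The paper's observation is shorter: since $A^{-s}=(A^s)^{-1}$, the identity $A^{-s}u=\lambda^{-s}u$ holds if and only if $A^su=\lambda^su$ for the same $s$. This gives (1)$\Leftrightarrow$(3) and (2)$\Leftrightarrow$(4) at once.
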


The proof of Theorem \ref{thm:main theorem_eigen} is split into several lemmas.

\begin{lemma}\label{lem:half_power_eigen}
Let $0<s<2$. If $A^s u = \lambda^s u$ then $A^{s/2} u = \lambda^{s/2} u$.
\end{lemma}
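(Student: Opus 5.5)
Set $B=A^{s/2}$ and $\mu=\lambda^{s/2}>0$. Since $0<s/2<1$, the operator $B$ is nonnegative, and by the semigroup property $B^2=A^{s/2}\circ A^{s/2}=A^s$. So the hypothesis says that $u\in D(B^2)$ and $B^2u=\mu^2u$. This factors as
$$(B+\mu I)(B-\mu I)u=0 .$$
Let $v=(B-\mu I)u=A^{s/2}u-\lambda^{s/2}u$. This is well defined because $u\in D(A^s)\subset D(A^{s/2})$. Moreover $v\in D(B)$, since $Bu\in D(B)$ (because $u\in D(B^2)$) and $u\in D(B)$. The factorization therefore reads $(B+\mu I)v=0$.

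We now use nonnegativity of $B$. By definition $(-\infty,0)\subset\rho(B)$, which with the paper's convention $\rho(B)=\{\mu:(\mu I+B)^{-1}\ \hbox{bounded}\}$ means exactly that $\mu I+B$ is injective for $\mu>0$. Hence $(B+\mu I)v=0$ forces $v=0$, that is, $A^{s/2}u=\lambda^{s/2}u$.

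The step that needs care is the identification $B^2=A^s$ as operators, including domains. In particular, we need that $u\in D(A^s)$ implies $A^{s/2}u\in D(A^{s/2})$ with $A^{s/2}A^{s/2}u=A^su$. This is precisely the stated semigroup property $A^{s_1+s_2}=A^{s_1}\circ A^{s_2}$ with $s_1=s_2=s/2$, read as equality of operators with domains. Alternatively, it follows from $(A^{s/2})^2=A^{s}$, using $(A^r)^2=A^{2r}$ for $0<r<1$.

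The restriction $s<2$ is what guarantees $s/2<1$, so that $A^{s/2}$ is known to be nonnegative. Nonnegativity is the fact behind the injectivity of $B+\mu I$; for powers $s/2\ge 1$ the operator $A^{s/2}$ need not be nonnegative.
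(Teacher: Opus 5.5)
Your argument is correct and is essentially the paper's proof. The paper sets $w=\lambda^{s/2}u-A^{s/2}u$ (the negative of your $v$), uses $A^{s/2}(A^{s/2}u)=A^su=\lambda^su$ to get $(A^{s/2}+\lambda^{s/2}I)w=0$, and concludes $w=0$ because $A^{s/2}$ is nonnegative for $0<s/2<1$, so $A^{s/2}+\lambda^{s/2}I$ is one-to-one.

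One small remark on your justification of injectivity. Read literally, the paper's displayed convention for $\rho$ combined with ``$(-\infty,0)\subset\rho(A)$'' has a sign slip, so it does not say what you claim it says. Injectivity of $\mu I+B$ for $\mu>0$ is better justified as the intended meaning of nonnegativity, which the condition $\sup_{\mu>0}\|\mu(\mu I+B)^{-1}\|<\infty$ presupposes anyway.
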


\begin{proof}
    Since $u \in D(A^s)$, we also have $u \in D(A^{s/2})$. Let $v = A^{s/2} u$. Then
    $$ A^{s/2}v = A^s u = \lambda^s u.$$
    Define now $w = \lambda^{s/2} u- v\in D(A^{s/2})$. We see that
    $$ A^{s/2}w = A^{s/2}(\lambda^{s/2} u) - A^{s/2} v = \lambda^{s/2} v- \lambda^s u = -\lambda^{s/2} w.$$
    Thus, $(A^{s/2} + \lambda^{s/2} I) w = 0$. Since $0<s/2<1$, we have that $A^{s/2}$ is a nonnegative operator, and thus $(A^{s/2} + \lambda^{s/2}I)$ is one-to-one and, hence, $w=0$, which means that $A^{s/2}u=\lambda^{s/2}u$.
\end{proof}

\begin{lemma}\label{lem:double power}
    Let $s>0$. If $A^s u = \lambda^s u$ then $A^{ks} u = \lambda^{ks} u$ for all $k\in\mathbb{N}$.
\end{lemma}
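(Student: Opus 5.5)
We argue by induction on $k$, using the semigroup property $A^{s_1+s_2}=A^{s_1}\circ A^{s_2}$ for $s_1,s_2>0$ recalled above. The case $k=1$ is the hypothesis. Suppose that $A^{ks}u=\lambda^{ks}u$ for some $k\in\mathbb{N}$. We want to show that $u\in D(A^{(k+1)s})$ and compute $A^{(k+1)s}u$.

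The composition $A^{s}\circ A^{ks}$ has domain $\{v\in D(A^{ks}): A^{ks}v\in D(A^s)\}$. Here $u\in D(A^{ks})$ by the induction hypothesis. Moreover, $A^{ks}u=\lambda^{ks}u$ is a scalar multiple of $u$, and $u\in D(A^s)$ by assumption, so $A^{ks}u\in D(A^s)$. Hence $u\in D(A^s\circ A^{ks})=D(A^{(k+1)s})$, and by the semigroup property and linearity of $A^s$,
$$A^{(k+1)s}u=A^s\big(A^{ks}u\big)=A^s(\lambda^{ks}u)=\lambda^{ks}A^su=\lambda^{ks}\lambda^su=\lambda^{(k+1)s}u.$$
This closes the induction.

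The only delicate point is the domain bookkeeping. The semigroup property must be understood as an equality of operators including domains, namely $D(A^{s_1+s_2})=\{v\in D(A^{s_2}):A^{s_2}v\in D(A^{s_1})\}$. This is how it is stated in the cited references \cite{martinez2001fractional}, so no further work is needed beyond the observation that eigen-relations keep $u$ inside every domain encountered. No restriction on $s$, such as $s<2$, is required here, in contrast with Lemma \ref{lem:half_power_eigen}.
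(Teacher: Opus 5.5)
Your proof is correct and is essentially the paper's argument: the paper applies $A^s$ to $f=A^su=\lambda^su\in D(A^s)$, uses the semigroup property to get $u\in D(A^{2s})$ with $A^{2s}u=\lambda^{2s}u$, and then says ``by iteration.'' Your induction via $A^{(k+1)s}=A^s\circ A^{ks}$, with the domain check $A^{ks}u=\lambda^{ks}u\in D(A^s)$, spells out that iteration.
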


\begin{proof}
    Let $f=A^s u$. Then $f= \lambda^s u$ and so $f \in D(A^s)$. Now,
$$A^s f = A^s (A^s u) =A^s (\lambda^s u) = \lambda^{2s}u,$$
which implies that $u \in D(A^{2s})$ and $A^{2s} u = \lambda^{2s} u$. By iteration, the general result for any $k\geq1$ follows.
\end{proof}

\begin{lemma}\label{lem:s less than 1}
    Let $0<s<2$. If $A^s u =\lambda^{s} u$ then $A^{\alpha} u = \lambda^{\alpha} u$ for every $0 <\alpha< \min \{s, 1\}$.
\end{lemma}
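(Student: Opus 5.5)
The plan is to combine Lemma \ref{lem:half_power_eigen} and Lemma \ref{lem:double power} to obtain the equation on a dense set of exponents in $(0,s)$. I would then pass to the limit using a continuity property of $\alpha\mapsto A^\alpha u$.

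\emph{Step 1 (dyadic exponents).} Since $0<s<2$, Lemma \ref{lem:half_power_eigen} gives $A^{s/2}u=\lambda^{s/2}u$. Now $0<s/2<1<2$, so the lemma applies again. By induction, $A^{s/2^k}u=\lambda^{s/2^k}u$ for every $k\geq 0$.

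\emph{Step 2 (dyadic multiples).} Applying Lemma \ref{lem:double power} with exponent $s/2^k$ gives $A^{ms/2^k}u=\lambda^{ms/2^k}u$ for all $m,k\in\mathbb{N}$. Hence the equation holds for every exponent in
$$\mathcal{D}=\{ms/2^k: m,k\in\mathbb{N}\}\cap(0,s],$$
which is dense in $(0,s)$.

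\emph{Step 3 (passing to the limit).} Fix $0<\alpha<\min\{s,1\}$ and choose $\alpha_j\in\mathcal{D}$ with $\alpha_j\to\alpha$. Then $A^{\alpha_j}u=\lambda^{\alpha_j}u\to\lambda^\alpha u$. It remains to show that $A^{\alpha_j}u\to A^\alpha u$, i.e.\ that $\beta\mapsto A^\beta u$ is continuous at $\alpha$ for $u\in D(A^s)$.

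This is the main obstacle, because the paper has only stated continuity for bounded $A$, or for $u\in D(A^{s_0})\cap R(A^{s_0})$ when $A$ is one-to-one. I would try first to use the standard fact from Mart\'inez--Sanz \cite{martinez2001fractional}: for a nonnegative $A$ and $u\in D(A^{s})$, the map $\beta\mapsto A^\beta u$ is continuous (indeed holomorphic) on $0<\mathrm{Re}\,\beta<s$. This would conclude directly.

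If a self-contained argument is preferred, I would work with the Balakrishnan representation of $A^{\beta}$ for $0<\beta<1$, which is where the restriction $\alpha<1$ enters. I would first treat the bounded case, or $A+\varepsilon I$, where the formula is explicit. Then I would use dominated convergence in $\mu$, with the bounds $\|\mu(\mu I+A)^{-1}\|\leq M$ and $\|(\mu I+A)^{-1}A\|\leq M+1$ controlling the integrand near $0$ and near $\infty$ uniformly for $\beta$ in a compact subinterval of $(0,1)$.

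To cover the case $u\notin D(A)$, I would first replace $u$ by $A^{\gamma}u=\lambda^{\gamma}u$ with $\gamma\in\mathcal{D}$ small. This lets me write $A^{\beta}u=\lambda^{-\gamma}A^{\beta}A^{\gamma}u$, which reduces the problem to exponents for which the integrand converges.

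Alternatively, closedness of $A^\alpha$ could finish the argument. Write $A^{\alpha}=A^{\alpha-\beta}A^{\beta}$ with $\beta\in\mathcal{D}$, $\beta<\alpha$. Then verify that $A^{\alpha-\beta}u\to u$ in $X$ as $\alpha-\beta\to0^+$, using the approximate identity property of $A^{\delta}$ as $\delta\to0^+$ on $\overline{D(A)}$ together with the eigen-relation. That would give $A^\alpha u=\lim_j\lambda^{\beta_j}A^{\alpha-\beta_j}u=\lambda^\alpha u$.
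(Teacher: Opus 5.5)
Your proposal is correct and follows essentially the paper's route: iterate Lemma~\ref{lem:half_power_eigen} to get the exponents $s/2^k$, extend to dyadic multiples (the paper uses binary expansions and the semigroup property, you use Lemma~\ref{lem:double power}), then pass to the limit by continuity of $\beta\mapsto A^\beta u$. The paper invokes that continuity without comment, and you justify it properly via holomorphy of $\beta\mapsto A^\beta u$ on $0<\mathrm{Re}\,\beta<s$ for $u\in D(A^s)$. Your worry about $u\notin D(A)$ is unnecessary, since Lemma~\ref{lem:double power} already gives $u\in D(A^{ks})\subset D(A)$ for $ks\geq 1$, so the Balakrishnan formula applies directly to $u$.
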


\begin{proof}
    By iteration of Lemma \ref{lem:half_power_eigen}, we have that $A^{s/2^k} u= \lambda^{s/2^k} u$ for any $k \in \mathbb{N}$. Now consider any $\alpha$ such that $0 <\alpha< \min \{s, 1\}$. Using the binary representation of real numbers, we can write
$$ \frac{\alpha}{s} = \lim_{n\to\infty}\sum^n_{k=1} \frac{c_k}{2^k}
=:\lim_{n\to\infty}\beta_n,$$
where $c_k =0$  or $c_k =1$.
Then we see that, by the semigroup property of positive fractional powers, for every $n$, 
$$A^{s\beta_n}u = \lambda^{s \beta_n}u.$$
The continuity with respect to the fractional power then implies that
$$A^\alpha u = \lim_{n\to\infty}A^{s\beta_n}u=\lim_{n\to\infty}\lambda^{s\beta_n}u=\lambda^\alpha u$$
as desired.
\end{proof}

\begin{lemma}\label{lem:alphalessthans}
    Let $0<s<2$. If $A^s u = \lambda^{s}u$ then $A^{\alpha} u = \lambda^{\alpha }u$ for every $0 <\alpha<s$.
\end{lemma}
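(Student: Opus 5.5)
The plan is to split into the cases $s\le 1$ and $1<s<2$. If $0<s\le1$ then $\min\{s,1\}=s$, and Lemma \ref{lem:s less than 1} already gives $A^\alpha u=\lambda^\alpha u$ for all $0<\alpha<s$. So only the case $1<s<2$ needs work, where Lemma \ref{lem:s less than 1} gives the conclusion only for $0<\alpha<1$, and we must extend it to $1\le\alpha<s$.

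For $1<s<2$, I will build a larger exponent from the known small ones using the semigroup property $A^{s_1+s_2}=A^{s_1}\circ A^{s_2}$ for positive powers. Fix $1\le\alpha<s$ and write $\alpha=\alpha_1+\alpha_2$ with $0<\alpha_1,\alpha_2<1$; for instance $\alpha_1=\alpha_2=\alpha/2$, which lies in $(0,1)$ because $\alpha<s<2$. By Lemma \ref{lem:s less than 1} we have $A^{\alpha_2}u=\lambda^{\alpha_2}u$. This vector is a scalar multiple of $u$, so it lies in $D(A^{\alpha_1})$, and
$$A^{\alpha_1}\bigl(A^{\alpha_2}u\bigr)=\lambda^{\alpha_2}A^{\alpha_1}u=\lambda^{\alpha_1+\alpha_2}u.$$
The point needing care is the domain statement hidden in the semigroup property. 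We need that $u\in D(A^{\alpha_2})$ together with $A^{\alpha_2}u\in D(A^{\alpha_1})$ implies $u\in D(A^{\alpha_1+\alpha_2})$ and $A^{\alpha_1+\alpha_2}u=A^{\alpha_1}A^{\alpha_2}u$. This is exactly how the semigroup property $A^{s_1+s_2}=A^{s_1}\circ A^{s_2}$ is understood, as an operator identity including domains, and it was already used this way in Lemma \ref{lem:double power}.

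There is a simpler route that avoids domains altogether. Since $s>1$, we know $u\in D(A^s)\subset D(A^\alpha)$ for $\alpha<s$, so $A^\alpha u$ is defined. We also know $u\in D(A^{\alpha_2})$, hence $A^\alpha u=A^{\alpha_1}A^{\alpha_2}u$ directly, and the computation above finishes the proof. I expect no real obstacle; the only delicate point is this domain inclusion, and the remark $D(A^{s})\subset D(A^\alpha)$ handles it.
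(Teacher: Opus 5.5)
Your argument is correct and is essentially the paper's proof: the paper also sets $\gamma=\alpha/2\in(0,1)$, obtains $A^\gamma u=\lambda^\gamma u$ from Lemma \ref{lem:s less than 1}, and concludes $A^\alpha u=A^\gamma(A^\gamma u)=\lambda^\alpha u$ by the semigroup property. Your explicit treatment of $s\le 1$ and of the endpoint $\alpha=1$ fills in cases the paper leaves implicit. Your ``simpler route'' does not really avoid domains, since it relies on the other inclusion $D(A^{\alpha})\subset D(A^{\alpha_1}\circ A^{\alpha_2})$ of the same operator identity, but that inclusion is equally valid.
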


\begin{proof}
    Assume first that $1< \alpha < 2$ and let $\gamma = \alpha /2$. Then $0<\gamma <1$ and $0< \gamma/s <1$. By Lemma \ref{lem:s less than 1}, $A^\gamma u = \lambda^{\gamma} u$. This gives 
    $$ A^\alpha u = A^{2 \gamma} u = A^\gamma(A^\gamma u)=\lambda^{2\gamma} u=\lambda^\alpha u. $$
\end{proof}
    
\begin{proof}[Proof of Theorem \ref{thm:main theorem_eigen}]
Assume that $A^s u = \lambda^s u$ for some fixed $0<s<2$. 
Lemma \ref{lem:alphalessthans} gives that $A^\alpha u = \lambda^{\alpha}u$ for every $0<\alpha<s$. Next choose $s_1$ such that $s<s_1$. Then there exists $\alpha<s$ such that $2^n \alpha = s_1$ for some $n \in \mathbb{N}$. But we know $A^\alpha u = \lambda^\alpha u$. Then using Lemma \ref{lem:double power}, we see that $A^{2^n \alpha} u =\lambda^{2^n \alpha} u$ or, which is the same, $A^{s_1} u = \lambda^{s_1}u$.
Thus $(1)$ implies $(2)$.

The equivalence with $(3)$ and $(4)$ follows immediately from the positive powers case, by applying $A^s$ to both sides of the equation $A^{-s}u=\lambda^{-s}u$ to get
$u=\lambda^{-s}A^su$.
\end{proof}

\begin{remark}
Theorem \ref{thm:main theorem_eigen} assumes the existence of an exponent $s\in (0,2)$ such that $A^s u=\lambda^s u$. This is due to our proof, because for Lemma \ref{lem:half_power_eigen} we need $A^{s/2}$ to be a nonnegative operator in order to conclude. We do not know whether the equivalence in Theorem \ref{thm:main theorem_eigen} remains valid under the weaker assumption that $A^s u=\lambda^s u$ for some arbitrary $s>0$.
\end{remark}

\begin{remark}
    It would also be interesting to extend Theorem \ref{thm:main theorem_eigen} to complex $s$ and $\lambda$. We do not focus on those cases since we are interested on the real case as this is the one that has typically appeared in previous applications.
\end{remark}

\section{Logarithmic Helmholtz equations}\label{section:log}

The logarithm of nonnegative, linear operators was first defined by Nollau \cite{MR261381}, see also \cite{martinez2001fractional}.

Let $A$ be a nonnegative, linear operator. Then the \textbf{logarithm} $\log(A)$ of $A$ is defined as
\begin{equation}\label{eq:logs}
\log(A)u=\frac{d}{ds}A^{s}u\bigg|_{s=0}=\lim_{s\to0^+}\frac{1}{s}(A^su-u)
\end{equation}
for $u\in D(\log(A))=\{u\in X:\eqref{eq:logs}~\hbox{exists in}~X\}$.
Observe that this way of defining $\log(A)$ is analogous to the elementary identity
\[
\log \lambda
=
\frac{d}{ds}\lambda^s\bigg|_{s=0}
\qquad\hbox{for}~\lambda>0.
\]
If, in addition, $A$ is one-to-one, then 
we have the equivalent formula
\begin{equation}\label{eq:log-s}
-\log(A)u=\frac{d}{ds}A^{-s}u\bigg|_{s=0}=\lim_{s\to0^+}\frac{A^{-s}u-u}{s},
\end{equation}
whenever $u\in D(A)\cap R(A)$, see \cite{martinez2001fractional}. Under the stronger assumption that $0\in\rho(A)$, we get that $A^{-s}$ is bounded with domain $D(A^{-s})=X$.
In fact, in this case, the family of operators $\{A^{-s}\}_{s\geq0}$ defines a $C_0$-semigroup on $X$. It then follows that $-\log(A)$ is simply the infinitesimal generator of such a semigroup, and it is therefore a closed operator with dense domain.

\begin{theorem}[Equivalence between fractional and logarithmic Helmholtz equations]\label{thm:log1}
Let $A$ be a nonnegative, linear operator on $X$. Fix $u\in X$ and $\lambda>0$. If
\begin{equation}\label{eq:forsome}
A^su =\lambda^s u\qquad\hbox{for some}~0<s<2
\end{equation}
then
\begin{equation}\label{eq:forlog}
  \log(A) u = (\log\lambda) u.
\end{equation}
If, in addition, $A$ is one-to-one, then
any of the following statements
\begin{enumerate}
    \item[$(1)$] $A^su =\lambda^s u$ for some $0<s<2$ (or, equivalently, all $s>0$);
    \item[$(2)$] $A^{-s}u =\lambda^{-s} u$ for some $0<s<2$ (or, equivalently, all $s>0$);
\end{enumerate}
imply \eqref{eq:forlog}. 

Finally, if $0\in\rho(A)$ then the converse statement holds, that is, \eqref{eq:forlog} implies both $(1)$ and $(2)$.
\end{theorem}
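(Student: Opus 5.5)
The plan is to reduce everything to Theorem \ref{thm:main theorem_eigen} and then differentiate in $s$ at $s=0$.

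\emph{Forward direction.} Assume \eqref{eq:forsome}. By Theorem \ref{thm:main theorem_eigen}, $A^su=\lambda^su$ holds for \emph{all} $s>0$. Then, for every $s>0$,
$$\frac{1}{s}(A^su-u)=\frac{\lambda^s-1}{s}\,u.$$
As $s\to0^+$ the right-hand side converges in $X$ to $(\log\lambda)u$. Hence the limit in \eqref{eq:logs} exists, so $u\in D(\log(A))$ and $\log(A)u=(\log\lambda)u$. No continuity of $s\mapsto A^su$ is needed, since the difference quotient is explicitly a scalar multiple of $u$.

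\emph{One-to-one case.} Statement (1) is exactly \eqref{eq:forsome}, so it gives \eqref{eq:forlog} by the forward direction. By Theorem \ref{thm:main theorem_eigen}, statement (2) is equivalent to (1), so it also implies \eqref{eq:forlog}. The parenthetical ``some/all'' equivalences in (1) and (2) are just restatements of that theorem.

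\emph{Converse when $0\in\rho(A)$.} Here $\{A^{-s}\}_{s\ge0}$ is a $C_0$-semigroup on $X$ whose generator is $-\log(A)$, as stated before the theorem via \eqref{eq:log-s}. Let $B:=-\log(A)$ and suppose $Bu=-(\log\lambda)u$. Set $v(s)=A^{-s}u$. Since $u\in D(B)$, standard semigroup theory gives $v\in C^1([0,\infty);X)$ with
$$v'(s)=A^{-s}Bu=-(\log\lambda)A^{-s}u=-(\log\lambda)v(s),\qquad v(0)=u.$$
The function $w(s)=\lambda^{-s}u$ solves the same linear ODE with the same initial datum. By uniqueness of classical solutions of $\partial_sv=Bv$, $v(0)=u$ (or directly, since $\frac{d}{ds}\big(\lambda^{s}v(s)\big)=0$), we get $A^{-s}u=\lambda^{-s}u$ for all $s\ge0$, which is (2). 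Applying Theorem \ref{thm:main theorem_eigen} (note $A$ is one-to-one because $0\in\rho(A)$) then yields (1).

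\emph{Main obstacle.} The only delicate point is identifying $\log(A)$, defined by \eqref{eq:logs} through positive powers, with minus the generator of $\{A^{-s}\}$, defined through negative powers. The paper cites \cite{martinez2001fractional} for \eqref{eq:log-s} on $D(A)\cap R(A)$, and asserts that $-\log(A)$ is the generator when $0\in\rho(A)$. I will take this identification of domains as given. If it needed verification, I would argue as follows. For $u\in D(\log(A))$ with $0\in\rho(A)$, write
$$A^{-s}u-u=-A^{-s}(A^su-u).$$
Dividing by $s$ and using that $A^{-s}\to I$ strongly with uniformly bounded norms for small $s$ (since $\{A^{-s}\}$ is a $C_0$-semigroup), this gives $\frac1s(A^{-s}u-u)\to-\log(A)u$. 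Hence $D(\log(A))$ is contained in the generator's domain and the two operators agree there, which is exactly what the converse argument uses.
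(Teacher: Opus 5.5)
Your proposal is correct and follows essentially the same route as the paper. The forward direction uses Theorem \ref{thm:main theorem_eigen} together with the explicit difference quotient $\frac{\lambda^s-1}{s}u$, and the converse compares $A^{-s}u$ with $\lambda^{-s}u$ as solutions of the Cauchy problem for the generator $-\log(A)$ of $\{A^{-s}\}_{s\ge0}$. Your extra check, via $A^{-s}u-u=-A^{-s}(A^su-u)$, that $D(\log(A))$ lies in the generator's domain with the two operators agreeing there fills in a step the paper only asserts.
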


\begin{proof}
Suppose that \eqref{eq:forsome} holds. Then, by
Theorem \ref{thm:main theorem_eigen},
$A^su=\lambda^su$ for all $s>0$. Hence, by definition of the logarithm of $A$,
$$\log(A)u=u\lim_{s\to0}\frac{\lambda^s-1}{s}=(\log\lambda)u.$$

Assume that $A$ is one-to-one. Then, by Theorem \ref{thm:main theorem_eigen}, $(1)$ and $(2)$ are equivalent and, by the proof above, $(1)$ implies \eqref{eq:forlog}.

Finally, suppose that $0\in\rho(A)$ and that \eqref{eq:forlog} holds. It is enough to prove $(2)$ for all $s>0$. Since $u\in D(\log(A))$, the unique solution to the evolution equation
$$
\begin{cases}
    v_t = -\log(A)v&\hbox{for}~t>0 \\
    v(0) = u
\end{cases}
$$
is given by the semigroup generated by $-\log(A)$ acting on $u$, that is, $v(t)=e^{-t\log(A)}u=A^{-t}u$.
On the other hand, we verify that
$w(t)= \lambda^{-t} u$ also solves the same initial value problem. Indeed, $w(0)=u$ and
$$w_t = -(\log\lambda) \lambda^{-t} u = -\lambda^{-t} \log(A)u = - \log(A) w.
$$
Then, by uniqueness, $w=v$, which means that $A^{-t}u=\lambda^{-t}u$ for all $t\geq0$.
\end{proof}

We have essentially introduced two definitions of the logarithm of the nonnegative operator $A$, one in \eqref{eq:logs} and another one
(for the case when $A$ is also one-to-one) in \eqref{eq:log-s}. In order to compare both definitions, we assume that $A=-L$, where $L$ is the infinitesimal generator of a $C_0$-semigroup $\{e^{tL}\}_{t\geq0}$ on $X$.

\begin{proposition}[Formula for $\log(-L)$ for generators of uniformly bounded  $C_0$-semigroups]\label{prop:logs}
    Let $L$ be the infinitesimal generator a uniformly bounded $C_0$-semigroup $\{e^{tL}\}_{t\geq0}$ on $X$. Then, for any $u\in D(L)$,
    \begin{equation}\label{eq:logslimit}
    \log(-L)u=\lim_{s\to0}\frac{1}{\Gamma(1-s)}\int_0^\infty\frac{e^{-t}u-e^{tL}u}{t^{1+s}}\,dt.
    \end{equation}
    Therefore, at least formally,
    $$\log(-L)u=\int_0^\infty\frac{e^{-t}u-e^{tL}u}{t}\,dt.$$
\end{proposition}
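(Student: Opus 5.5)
The plan is to combine the semigroup (Balakrishnan-type) formula for fractional powers of $-L$ with the definition \eqref{eq:logs} of $\log(-L)$. For a uniformly bounded $C_0$-semigroup and $0<s<1$, I will use the standard representation
$$(-L)^su=\frac{1}{\Gamma(-s)}\int_0^\infty\frac{e^{tL}u-u}{t^{1+s}}\,dt,\qquad u\in D(L).$$
This formula can be derived from the Balakrishnan integral recalled in Section 2. One writes $(\mu I-L)^{-1}=\int_0^\infty e^{-\mu t}e^{tL}\,dt$, substitutes this into $\frac{\sin(s\pi)}{\pi}\int_0^\infty\mu^{s-1}(\mu I-L)^{-1}(-L)u\,d\mu$, and applies Fubini, using $\int_0^\infty\mu^{s-1}e^{-\mu t}\,d\mu=\Gamma(s)t^{-s}$ and $\Gamma(s)\Gamma(1-s)=\pi/\sin(s\pi)$. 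This gives $\frac{1}{\Gamma(1-s)}\int_0^\infty t^{-s}e^{tL}(-L)u\,dt$. Since $e^{tL}(-L)u=-\frac{d}{dt}e^{tL}u$, integrating by parts yields the formula above; the boundary terms vanish because $\|e^{tL}u-u\|\le Ct\|Lu\|$ near $0$ and $e^{tL}u-u$ is bounded at $\infty$. The case $0\in\sigma(-L)$ needs no separate treatment: for $u\in D(L)$ the formula holds for $-L+\varepsilon I$, whose semigroup is $e^{-\varepsilon t}e^{tL}$, and dominated convergence as $\varepsilon\to0^+$ recovers it for $-L$.

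The scalar identity
$$\frac{1}{\Gamma(-s)}\int_0^\infty\frac{e^{-t}-1}{t^{1+s}}\,dt=1$$
is the above formula applied to $\lambda=1$. Using it, I rewrite
$$(-L)^su-u=\frac{1}{\Gamma(-s)}\int_0^\infty\frac{e^{tL}u-e^{-t}u}{t^{1+s}}\,dt.$$
This integral converges for $u\in D(L)$: near $t=0$ the integrand is $O(t^{-s})$, and near $t=\infty$ it is $O(t^{-1-s})$. Dividing by $s$ and using $s\Gamma(-s)=-\Gamma(1-s)$, I get
$$\frac{(-L)^su-u}{s}=\frac{1}{\Gamma(1-s)}\int_0^\infty\frac{e^{-t}u-e^{tL}u}{t^{1+s}}\,dt.$$
Letting $s\to0^+$ and invoking \eqref{eq:logs} then gives \eqref{eq:logslimit}, with the limit on the left existing exactly when the right-hand limit does.

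The main obstacle is showing that the limit actually exists for every $u\in D(L)$, which is needed so that $D(L)\subset D(\log(-L))$. The difficulty is at $t=\infty$: when $s=0$ the integrand is only $O(1/t)$, so the integral is not absolutely convergent. On $(0,1)$, dominated convergence works, with bound $C(\|u\|+\|Lu\|)t^{-s}\le Ct^{-1/2}$ for $s\le 1/2$. On $(1,\infty)$, the $e^{-t}$ part is harmless, and what remains is the convergence as $s\to0^+$ of $\int_1^\infty t^{-1-s}e^{tL}u\,dt$. This would follow from existence of the logarithm on $D(L)$, which is known for nonnegative operators in \cite{martinez2001fractional}, but that result is not stated in the paper. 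If it cannot be cited, the conclusion of the proposition should instead be read as follows: the limit in \eqref{eq:logslimit} exists if and only if $u\in D(\log(-L))$, and then equals $\log(-L)u$. The final "formal" identity is obtained by setting $s=0$ inside the integral, $\Gamma(1)=1$, which is justified whenever the improper integral converges.
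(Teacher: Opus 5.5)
Your core argument matches the paper's proof. You use the semigroup formula for $(-L)^su$, the scalar identity $\frac{1}{\Gamma(-s)}\int_0^\infty(e^{-t}-1)t^{-1-s}\,dt=1$, and $s\Gamma(-s)=-\Gamma(1-s)$ to show that $\frac{1}{s}((-L)^su-u)$ equals the expression inside the limit in \eqref{eq:logslimit} for each $0<s<1$. The paper's chain of equalities establishes exactly this and nothing more: one limit exists if and only if the other does, and then they agree.

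\textbf{The existence claim in your last paragraph is false.} Existence of $\log(-L)u$ for every $u\in D(L)$ is not an uncited fact; it fails in general.
\begin{itemize}
\item Take $L=0$. It generates the uniformly bounded semigroup $e^{tL}=I$, and $D(L)=X$. Here $(-L)^s=0$, so $\frac{1}{s}((-L)^su-u)=-u/s$ diverges for every $u\neq0$.
\item Injectivity does not rescue it. Take $L=\Delta$ on $L^2(\mathbb{R}^n)$ and set $\widehat{u}(\xi)=|\xi|^{-n/2}(\log(1/|\xi|))^{-1}$ for $|\xi|<1/2$, $\widehat{u}=0$ otherwise. Then $u\in H^2(\mathbb{R}^n)=D(\Delta)$. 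But $|2\log|\xi|\,\widehat{u}(\xi)|^2=4|\xi|^{-n}$ is not integrable near $0$, so by Fatou $u\notin D(\log(-\Delta))$.
\end{itemize}
The existence results in \cite{martinez2001fractional} are stated for $u\in D(A)\cap R(A)$ with $A$ one-to-one, not for all of $D(A)$. The range condition is precisely what controls the $t\to\infty$ tail you isolated: if $u=-Lv$, integrating by parts makes $\int_1^\infty t^{-1-s}e^{tL}u\,dt$ converge uniformly in $s$. So the ``if and only if'' reading is not a fallback; it is the actual content of the proposition. The $1/t$ tail is a genuine obstruction, and it disappears only under extra hypotheses such as exponential stability (compare Proposition~\ref{prop:log-s}).

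\textbf{A smaller point: the Fubini step in your derivation of the semigroup formula is not justified as written.} The double integral $\int_0^\infty\mu^{s-1}\int_0^\infty e^{-\mu t}\|e^{tL}Lu\|_X\,dt\,d\mu$ can be infinite. For an isometric semigroup such as translations, the inner integral is $\|Lu\|_X/\mu$, and $\int_0^\infty \mu^{s-2}\,d\mu=\infty$.

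The fix is to use the cancellation first: $(\mu I-L)^{-1}(-L)u=-\int_0^\infty\mu e^{-\mu t}(e^{tL}u-u)\,dt$. Since $\|e^{tL}u-u\|_X\le\min\{Mt\|Lu\|_X,(M+1)\|u\|_X\}$, the double integral is now absolutely convergent. Then $\int_0^\infty\mu^{s}e^{-\mu t}\,d\mu=\Gamma(1+s)t^{-1-s}$ gives the formula directly, with no integration by parts.
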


\begin{proof}
For $u\in D(L)$ and $0<s<1$ we can write
$$(-L)^su=\frac{1}{\Gamma(-s)}\int_0^\infty\big(e^{tL}u-u\big)\,\frac{dt}{t^{1+s}},$$
where the integral is absolutely convergent. Then, using that
$$1=\frac{1}{\Gamma(-s)}\int_0^\infty\big(e^{-t}-1\big)\,\frac{dt}{t^{1+s}},$$
and $(-s)\Gamma(-s)=\Gamma(1-s)$,
we get
\begin{align*}
    \log(-L)u&=\frac{1}{s}\lim_{s\to0}
    \big((-L)^su-u\big) \\
    &=-\lim_{s\to0}\frac{1}{\Gamma(1-s)}
    \bigg[\int_0^\infty\big(e^{tL}u-u\big)\,\frac{dt}{t^{1+s}}
    -\int_0^\infty\big(e^{-t}u-u\big)\,\frac{dt}{t^{1+s}}\bigg]
\end{align*}
from which \eqref{eq:logslimit} follows.
\end{proof}

\begin{proposition}[Formula for $\log(-L)$ for generators of exponentially stable $C_0$-semigroups]\label{prop:log-s}
    Let $L$ be the infinitesimal generator an exponentially stable $C_0$-semigroup $\{e^{tL}\}_{t\geq0}$ on $X$. Then, for any $u\in X$,
    \begin{equation}\label{eq:log-slimit}
    -\log(-L)u=\lim_{s\to0}\frac{1}{\Gamma(1+s)}\int_0^\infty\frac{e^{tL}u-e^{-t}u}{t^{1-s}}\,dt.
    \end{equation}
    In particular, if $u\in D(L)$ then
    $$-\log(-L)u=\int_0^\infty\frac{e^{tL}u-e^{-t}u}{t}\,dt.$$
\end{proposition}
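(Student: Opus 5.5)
The plan is to mirror the proof of Proposition \ref{prop:logs}, using the semigroup (Balakrishnan-type) formula for negative powers. Exponential stability gives $0\in\rho(L)$, hence $0\in\rho(-L)$, and $-L$ is one-to-one and nonnegative. So $(-L)^{-s}$ is bounded, $\{(-L)^{-s}\}_{s\geq0}$ is a $C_0$-semigroup, and by \eqref{eq:log-s} its generator is $-\log(-L)$. For $0<s<1$ and every $u\in X$ I would use the classical formula
$$(-L)^{-s}u=\frac{1}{\Gamma(s)}\int_0^\infty e^{tL}u\,\frac{dt}{t^{1-s}},$$
which converges absolutely because $\|e^{tL}u\|_X\leq Me^{-\varepsilon t}\|u\|_X$ and $t^{s-1}$ is integrable near $0$. (It is Laplace-transform-type and consistent with the Balakrishnan definition; I take it as known from \cite{martinez2001fractional}.) I also use the scalar identity
$$1=\frac{1}{\Gamma(s)}\int_0^\infty e^{-t}\,\frac{dt}{t^{1-s}}.$$

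Subtracting the two formulas gives
$$(-L)^{-s}u-u=\frac{1}{\Gamma(s)}\int_0^\infty\big(e^{tL}u-e^{-t}u\big)\frac{dt}{t^{1-s}}.$$
Dividing by $s$ and using $s\Gamma(s)=\Gamma(1+s)$ gives
$$\frac{(-L)^{-s}u-u}{s}=\frac{1}{\Gamma(1+s)}\int_0^\infty\frac{e^{tL}u-e^{-t}u}{t^{1-s}}\,dt .$$
Here the integral converges for each fixed $s>0$ and every $u\in X$. Letting $s\to0^+$, the left side tends to $-\log(-L)u$, with $-\log(-L)$ understood as the generator of $\{(-L)^{-s}\}$ in \eqref{eq:log-s}. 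This gives \eqref{eq:log-slimit}, and the limit exists exactly when $u\in D(\log(-L))$. The statement for all $u\in X$ should be read in this sense: the identity holds whenever either side exists.

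For the second claim, take $u\in D(L)$. Then $\|e^{tL}u-e^{-t}u\|_X\leq\int_0^t\|e^{rL}Lu+e^{-r}u\|_X\,dr\leq Ct$ near $0$, and the integrand decays exponentially at infinity. So $\|e^{tL}u-e^{-t}u\|_X/t$ is integrable on $(0,\infty)$, uniformly dominating $t^{s-1}\|e^{tL}u-e^{-t}u\|_X$ for $0<s<1/2$. Dominated convergence, together with $\Gamma(1+s)\to1$, then lets me pass to the limit inside the integral. This also shows that $D(L)\subset D(\log(-L))$.

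The main obstacle is justifying the negative-power formula and its agreement with $(-L)^{-s}=((-L)^{-1})^s$ defined via Balakrishnan. If needed, I would verify it by plugging the resolvent $(\mu I+(-L)^{-1})^{-1}$ into the Balakrishnan integral for the bounded operator $(-L)^{-1}$, writing resolvents as Laplace transforms of the semigroup, and applying Fubini.
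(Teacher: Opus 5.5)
Your proposal is correct and follows the paper's proof essentially step for step. Both use the Laplace-type formula for $(-L)^{-s}$, subtract the scalar identity $1=\frac{1}{\Gamma(s)}\int_0^\infty e^{-t}t^{s-1}\,dt$, use $s\Gamma(s)=\Gamma(1+s)$, identify the limit with the generator of $\{(-L)^{-s}\}_{s\geq0}$, and apply dominated convergence for $u\in D(L)$. Your reading of ``for any $u\in X$'' as ``whenever either side exists'' is also the right one.

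There is one small slip in the dominated convergence step. The function $t^{-1}\|e^{tL}u-e^{-t}u\|_X$ does not dominate $t^{s-1}\|e^{tL}u-e^{-t}u\|_X$ on $(1,\infty)$, because there $t^{s-1}>t^{-1}$. Use $\max\{1,t^{-1}\}\|e^{tL}u-e^{-t}u\|_X$ instead, that is, the bound $(Me^{-\varepsilon t}+e^{-t})\|u\|_X$ for $t\geq1$, as the paper does.
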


\begin{proof}
Since the $C_0$-semigroup $\{e^{tL}\}_{t\geq0}$ is exponentially stable, it follows that $0\in\rho(-L)$
and, in particular, $(-L)^{-1}$ is well-defined and bounded.
Moreover, for $s>0$, the negative fractional powers of $-L$ can be written as
$$(-L)^{-s}u=\frac{1}{\Gamma(s)}\int_0^\infty e^{tL}u\,\frac{dt}{t^{1-s}}$$
for any $u\in X$, where the integral is absolutely convergent. Then, using that
$$1=\frac{1}{\Gamma(s)}\int_0^\infty e^{-t}\,\frac{dt}{t^{1-s}},$$
and $s\Gamma(s)=\Gamma(1+s)$,
we get
\begin{align*}
    -\log(-L)u&=\frac{1}{s}\lim_{s\to0}
    \big((-L)^{-s}u-u\big) \\
    &=\lim_{s\to0}\frac{1}{\Gamma(1+s)}
    \bigg[\int_0^\infty e^{tL}u\,\frac{dt}{t^{1-s}}
    -\int_0^\infty e^{-t}u\,\frac{dt}{t^{1-s}}\bigg]
\end{align*}
from which \eqref{eq:log-slimit} follows.

Next, we show that if $u\in D(L)$ then we can pass to the limit in \eqref{eq:log-slimit} by applying the dominated convergence theorem. Indeed, for any $0<t<1$, by adding and subtracting $u$,
$$\frac{\|e^{tL}u-e^{-t}u\|_X}{t^{1-s}}
\leq \frac{(M\|Lu\|_X+\|u\|_X)t}{t^{1-s}}
\leq C\in L^1(0,1).$$
On the other hand, by the exponential decay of the semigroup, for any $t\geq1$ and $0<s<1$,
$$\frac{\|e^{tL}u-e^{-t}u\|_X}{t^{1-s}}
\leq \big(Me^{-\varepsilon t}+e^{-t}\big)\|u\|_X\in L^1(1,\infty).$$
The Proposition is proved.
\end{proof}

\section{Examples}\label{section:examples}

In this section we present a series of examples for which our main results hold. All of the operators $L$ involved in the examples will be generators of uniformly bounded or exponentially stable $C_0$-semigroups on appropriate Banach spaces $X$. We will consider examples where $0$ is in the resolvent set of $-L$ (and therefore, more can be said in terms of the equation $\log(-L)u=(\log\lambda)u$) and cases where $0\notin\rho(-L)$. 

Throughout this section, $\Omega\subset\mathbb{R}^n$, $n\geq1$,
is a bounded, Lipschitz domain.

\subsection{Divergence form operators on bounded domains}\label{subsection:divergenceform}

Here we follow \cite{Stinga-Caffa}.
Consider a divergence form linear operator $Lu = \operatorname{div}(a(x)\nabla u)$, where the matrix of coefficients $a(x)=(a^{ij}(x))_{i,j=1}^n$ is symmetric $a^{ij}(x) = a^{ji}(x)$, bounded, measurable and uniformly elliptic, that is, there is $\lambda>0$ such that $a^{ij}(x)\xi_i\xi_j\geq\lambda|\xi|^2$ for all $\xi\in\mathbb{R}^n$ and almost every $x\in\Omega$.

We first consider $L$ subject to homogeneous Dirichlet boundary condition $u\big|_{\partial\Omega} =0$ in the Sobolev space $H^1_0(\Omega)$ of functions $u\in L^2(\Omega)$ such that $\nabla u\in L^2(\Omega)$ and $u=0$ on $\partial\Omega$ in the sense of traces. The family of eigenfunctions $\{\phi_k\}_{k\geq1}\subset H^1_0(\Omega)$ of $-L$ are weak solutions to
$$\begin{cases}
-L\phi_k=\lambda_k\phi_k&\hbox{in}~\Omega\\
\phi_k=0&\hbox{on}~\partial\Omega
\end{cases}$$
and form an orthonormal basis of $L^2(\Omega)$. Here the eigenvalues $\lambda_k$, $k\geq1$, are all positive, with $0<\lambda_1<\lambda_2\leq\cdots\leq\lambda_k\leq\cdots\to\infty$. Then $-L$ is an unbounded operator on $L^2(\Omega)$.
The Lax--Milgram Theorem implies that $0\in\rho(-L)$. This can also be seen as follows: if $f\in L^2(\Omega)$ then $f=\sum_{k=1}^\infty f_k\phi_k$, where the Fourier coefficients of $f$ are $f_k=\langle f,\phi_k\rangle_{L^2(\Omega)}$, and then $u=\sum_{k=1}^\infty\frac{f_k}{\lambda_k}\phi_k$ is the unique weak solution to $-Lu=f$ in $\Omega$ with $u=0$ on $\partial\Omega$. The semigroup generated by $L$ is given by
$$e^{tL}u(x)=\sum_{k=1}^\infty e^{-t\lambda_k}u_k\phi_k(x)$$
where $u_k=\langle u,\phi_k\rangle_{L^2(\Omega)}$. It is clear that the $C_0$-semigroup $\{e^{tL}\}_{t\geq0}$ in $L^2(\Omega)$ is exponentially stable, with $\|e^{tL}u\|_{L^2(\Omega)}\leq e^{-t\lambda_1}\|u\|_{L^2(\Omega)}$.

The fractional power operators $(-L)^{\pm s}$, $s>0$, are defined in the domain
$$D((-L)^{\pm s}) = \Big\{ u\in L^2(\Omega) : \sum_{k=1}^{\infty}|\lambda_k^{\pm s}\langle  u,\phi_k\rangle_{L^2(\Omega)}|^2 < \infty \Big\}$$
as
$$(-L)^{\pm s}u = \sum_{k=1}^\infty\lambda_k^{\pm s}u_k \phi_k\in L^2(\Omega).$$
The heat kernel $W_t^D(x,z)$ for $-L$ with Dirichlet  boundary condition is given by
$$W_t^D(x,z):= \sum_{k=1}^{\infty}e^{-t\lambda_k}\phi_k(x)\phi_k(z)$$
in the sense that
$$\langle e^{tL}u,v\rangle_{L^2(\Omega)}=
\int_\Omega\int_\Omega W_t^D(x,z)u(z)v(x)\,dz\,dx.$$
For $0<s<1$ and $u,v\in D((-L)^s)$, manipulation of the semigroup formula 
$$(-L)^su = \frac{1}{\Gamma(-s)}\int_0^{\infty}(e^{tL}u - u)\frac{dt}{t^{1+s}}$$
yields the following integro-differential pointwise formula
\begin{multline*}
  \langle (-L)^su,v\rangle_{L^2(\Omega)}\\
 =\int_{\Omega}\int_{\Omega}(u(x)-u(z))(v(x)-v(z))K^D_s(x,z)\,dx\,dz + \int_{\Omega}u(x)v(x)B^D_s(x)\,dx 
\end{multline*}
with
$$0\le K^D_s(x,z ):= \frac{1}{2|\Gamma(-s)|}\int_0^{\infty}W^D_t(x,z)\frac{dt}{t^{1+s}}\leq \frac{c_{n,s}}{|x-z|^{n+2s}},\quad x\neq z,$$
and
$$B^D_s(x):= \frac{1}{2|\Gamma(-s)|}\int_0^{\infty}(1-e^{tL}1(x))\frac{dt}{t^{1+s}}\geq 0.$$
The proofs for the pointwise formula and kernel estimates, along with a treatment of the functional analytic setting for the fractional powers can be found in \cite{Stinga-Caffa}.

Next, the logarithm $\log(-L)$ of $-L$ has domain
$$D(\log(-L))=\Big\{ u\in L^2(\Omega)  : \sum_{k=1}^\infty |(\log\lambda_k)\langle u,\phi_k\rangle_{L^2(\Omega)}|^2<\infty\Big\}$$
and is given by
$$\log(-L)u(x)=\sum_{k=1}^\infty(\log\lambda_k)u_k\phi_k(x)\in L^2(\Omega).$$
Note that $D((-L)^s)\subset D(\log(-L))$ since, for $s>0$, $\lim_{r\rightarrow \infty}\frac{\log r}{r^s} =0.$
For $u,v \in D(\log(-L))$, we can write
\begin{align*}
    \langle \log(L)u,v \rangle_{L^2(\Omega)} &= \sum_{k=1}^\infty (\log\lambda_k)u_kv_k \\
    &=\sum_{k=1}^\infty \int_0^{\infty}\big[e^{-t}u_kv_k-e^{-t\lambda_k}u_kv_k\big]\,\frac{dt}{t} \\
    &=\int_0^{\infty}\bigg[e^{-t}\sum_{k=1}^\infty u_kv_k - \sum_{k=1}^\infty e^{-t\lambda_k}u_kv_k\bigg]\, \frac{dt}{t} \\
    &=\int_0^{\infty}\big(e^{-t}\langle u,v\rangle_{L^2(\Omega)} - \langle e^{tL}u,v\rangle_{L^2(\Omega)}\big)\,\frac{dt}{t}
    \\&=\int_0^\infty\langle e^{-t}u-e^{tL}u,v\rangle_{L^2(\Omega)}\,\frac{dt}{t}.
\end{align*}
The interchange of the sum and the integral above is justified because the eigenvalues of $-L$ form a strictly positive, unbounded sequence, so there exists $n\in \mathbb{N}$ such that $\log(\lambda_m) \geq 1$ for all $m\geq n$. Then,
\begin{align*}
    \sum_{k=1}^{\infty}\int_0^{\infty}|e^{-t}-e^{-t\lambda_k}||u_k||v_k|\,\frac{dt}{t} &= C + \sum_{k=n}^{\infty}|u_k||v_k|\int_0^{\infty}(e^{-t}-e^{-t\lambda_k})\,\frac{dt}{t}\\
    &= C + \sum_{k=n}^{\infty}\log(\lambda_k)|u_k||v_k|\\
    &\leq C + \sum_{k=n}^{\infty}\log(\lambda_k)^2(|u_k|^2 + |v_k|^2),
\end{align*}
and the last term is finite if $u,v \in D(\log(-L))$.

Continuing with the Dirichlet boundary condition case, it has been shown (see, for instance, \cite{ArendtTerElst1997}) that the heat kernel $W_t^D(x,z)$ of the semigroup $e^{tL}$satisfies a Gaussian upper-bound
$$0\leq W_t^D(x,z)\le ct^{-n/2}e^{-b\frac{|x-z|^2}{t}}$$
for some $b,c>0$, for all $t>0$ and $x,z\in\Omega$.
Since $-L$ is nonnegative and selfadjoint, the Gaussian bound on its heat kernel guarantees that the semigroup extends to a uniformly bounded $C_0$-semigroup on $L^p(\Omega)$, see \cite[Chapter 7]{Ouhabaz2005}.  In fact, for $1\leq p < \infty$, if we let $-L_p$ be the infinitesimal generator of $\{e^{tL}\}_{t\geq0}$ on $L^p(\Omega)$, then $\sigma(-L) = \sigma(-L_p)$, see \cite[Theorem 7.10]{Ouhabaz2005}.  In particular, $0\in \rho (-L_p)$.

We next consider $L$ subject to homogeneous Neumann boundary condition $a(x)\cdot\nabla u\big|_{\partial\Omega} =0$ in the Sobolev space $H^1_\ast(\Omega)$ of functions $u\in L^2(\Omega)$ such that $\nabla u\in L^2(\Omega)$ and $\int_\Omega u\,dx=0$. The family of eigenfunctions $\{\psi_k\}_{k\geq1}\subset H^1_\ast(\Omega)$ of $-L$ are weak solutions to
$$\begin{cases}
-L\psi_k=\mu_k\psi_k&\hbox{in}~\Omega\\
a(x)\cdot\nabla\psi_k=0&\hbox{on}~\partial\Omega
\end{cases}$$
and form an orthonormal basis of $L^2_\ast(\Omega)=\{u\in L^2(\Omega):\int_\Omega u\,dx=0\}$. In $L^2_\ast(\Omega)$, the eigenvalues
$\{\mu_k\}_{k\geq1}$ are positive, with
$0<\mu_1<\mu_2\leq\cdots\leq\mu_k\leq\cdots\to\infty$. Once again, the Lax--Milgram Theorem (or the spectral representation of $-L$ in terms of eigenfunctions) give that $0\in\rho(-L)$. The semigroup generated by $L$ in $L^2_\ast(\Omega)$ is given by
$$e^{tL}u(x)=\sum_{k=1}^\infty e^{-t\mu_k}u_k\psi_k(x) = \int_\Omega W_t^N(x,z)u(z)\,dz$$
with $u_k=\langle u,\psi_k\rangle_{L^2(\Omega)}$ and
$$W_t^N(x,z)=\sum_{k=1}^\infty e^{-t\mu_k}\psi_k(x)\psi_k(z)$$
being the corresponding heat kernel. Clearly, $\{e^{tL}\}_{t\geq0}$ is an exponentially stable $C_0$-semigroup on $L^2_\ast(\Omega)$.

Similarly to the Dirichlet case, manipulation of the semigroup formula for fractional powers $0<s<1$ yields 
$$\langle (-L)^su,v\rangle_{L^2(\Omega)} = \int_{\Omega}\int_{\Omega}(u(x)-u(z))(v(x)-v(z))K_s^N(x,z)\,dx\,dz$$
with $$0\le K_s^N(x,z ):= \frac{1}{2|\Gamma(-s)|}\int_0^{\infty}W^N_t(x,z)\frac{dt}{t^{1+s}}\leq \frac{c_{\Omega,n,s}}{|x-z|^{n+2s}}, \ \ x,z\in \Omega.$$
Note that in this case, as opposed to the Dirichlet case, $e^{tL}1(x)\equiv 1$
(when considering the semigroup in $L^2(\Omega)$), which is why there is no zero order term as in the Dirichlet case. The fractional powers of $-L$, as well as $\log(-L)$ can be defined similarly to the Dirichlet case on the corresponding Hilbert spaces, always assuming that $\int_\Omega u\,dx=0$.

It is worth noting that if we were to consider $L$ on the domain $H^1(\Omega)$ then $0 \notin \rho(L)$, but we would be able to extend the semigroup, generated by $L$ on $L^2(\Omega)$ to a semigroup of positivity-preserving contractions on $L^p(\Omega)$, $1\leq p < \infty.$  Using the uniform ellipticity assumption, an obvious modification to the proof in \cite[Theorem 13.48]{vanNeerven_2022} gives that the semigroup preserves positivity, that is, $e^{tL}u\geq 0 $ almost everywhere if $u\geq 0$ almost everywhere. This implies that $\{e^{tL}\}_{t\geq0}$ is doubly submarkovian, since $e^{tL}1\equiv 1$.  Then \cite[Theorem 13.50]{vanNeerven_2022} applies to $L$, and we have the result. 

\subsection{Laplace--Beltrami operators on closed Riemannian manifolds}

Let $(M,g)$ be an $n$-dimensional, closed Riemannian manifold, with $n\geq 2$. The usual Laplace-Beltrami operator $\Delta_g$ admits a sequence of eigenfunctions $\{\phi_k\}_{k\geq1}$ which can be taken as an orthonormal basis of $L^2(M)$ along with corresponding eigenvalues $0=\lambda_1 \leq \lambda_2 \leq \cdots\leq\lambda_k\leq\cdots\to \infty$.  The semigroup generated by $\Delta_g$ is defined spectrally in the usual way \cite{Rosenberg1997}.  We restrict to $H^1_*(M)$, the Hilbert-Sobolev space on $M$ of zero-mean functions on $M$.  In this setting, the fractional powers and the logarithm of $-\Delta_g$ can be defined spectrally in a way analogous to the divergence form operator with Neumann boundary condition.

\subsection{Nondivergence form operators in bounded domains} 

 We consider the Banach space of continuous functions in $\overline{\Omega}$ that vanish on the boundary:
 $$C_0(\Omega):=\{u\in C(\overline{\Omega}): \left. u \right|_{\partial\Omega}=0\}.$$
 We define the nondivergence form elliptic operator
 $$Lu=a^{ij}(x)\partial_{ij}u\qquad\hbox{in}~\Omega$$
 with domain
 $D(L)=\{u\in C_0(\Omega)\cap W^{2,n}_{\mathrm{loc}}(\Omega): Lu\in C_0(\Omega)\}$, where $W^{2,n}_{\mathrm{loc}}(\Omega)$ is the set of functions $u\in L^n_{\mathrm{loc}}(\Omega)$ such that their second order derivatives $\partial_{ij}u$ are also in $L^n_{\mathrm{loc}}(\Omega)$. Here the coefficients $a^{ij}(x) \in C(\Omega)\cap L^{\infty}(\Omega)$ are symmetric and uniformly elliptic. Under these conditions, $L$ generates a uniformly bounded $C_0$-semigroup  $\{e^{tL}\}_{t\geq 0}$ in $C_0(\Omega)$, which is also exponentially stable, see \cite[Proposition~4.7]{ArendtSchaetzle2014}. 

Since the semigroup is exponentially stable, it follows that $0\in\rho(L)$. Another way of showing the latter is as follows: if $Lu=0$ in $\Omega$ then, from the Alexandroff--Bakelman--Pucci (ABP) estimate \cite{gilbarg2001elliptic} and the fact that $u\in C_0(\Omega)$, it must be that $u=0$ in $\Omega$.
For $f\in C_0(\Omega)$, there is a solution to $Lu =f$, see \cite{ArendtSchaetzle2014} or \cite{gilbarg2001elliptic}. Then, as a corollary to the ABP estimate, we have 
$$\|u\|_{C_0(\Omega)}=\|u\|_{L^{\infty}(\Omega)}\leq \|u\|_{L^{\infty}(\partial\Omega)} + C\|f\|_{L^{\infty}(\Omega)}=C\|f\|_{L^{\infty}(\Omega)},$$
so the inverse of $L$ is bounded and, thus, $0\in\rho(L)$.

 The fractional powers $(-L)^s$ can be defined using the method of semigroups, see, for instance, \cite{MR220096}. For example, for $0<s<1$, we have (see \cite{STINGA2021245} and \cite{MR4914523})
$$(-L)^su(x) = \lim_{\varepsilon\to 0}\frac{1}{\Gamma(-s)}\int_{\varepsilon}^{\infty}(e^{tL}u(x)-u(x))\frac{dt}{t^{1+s}}\qquad\hbox{in}~C_0(\Omega).$$
In fact, it is known that $u\in D((-L)^s)$ if and only if the above limit exists. This formula can be extended for any $s>0$ accordingly.

The logarithm of $-L$ is defined by the semigroup formula
$$ \log(-L)u(x) = \lim_{s\rightarrow 0}\frac{1}{\Gamma(1+s)}\int_0^{\infty}\frac{e^{-t}u(x) - e^{tL}u(x)}{t}\,dt\qquad\hbox{in}~C_0(\Omega), $$
where we impose that $u\in D(\log(-L))$ if and only if the limit above exists. 

\subsection{Master operators in bounded domains}

This example follows from \cite{B-S-1, B-DLC-S}, wherein more details and regularity theory can be found. We consider the parabolic operator $$Lu= -\partial_tu +\operatorname{div} (a(x)\nabla u)$$ where $a(x)$ is symmetric, bounded, measurable and uniformly elliptic as in Subsection \ref{subsection:divergenceform}, acting on
functions $u(t,x)\in L^2(\mathbb{R} \times \Omega)$ subject to, say, homogeneous Dirichlet boundary conditions $u(t,x)=0$ for $x\in\partial\Omega$ and all $t\in\mathbb{R}$ (the homonogeneous Neumann boundary condition can also be considered in a parallel way to the following analysis by adding the restriction that all functions $u$ have zero average with respect to $x$ for all $t$).  

Using the eigenfunction decomposition of the elliptic operator $\operatorname{div}(a(x)\nabla)$ on $H^1_0(\Omega)$ as described in Subsection \ref{subsection:divergenceform} and the Fourier transform in $\mathbb{R}$, given a function $u\in L^2(\mathbb{R}\times \Omega)$ we can write:
$$u(t,x)= \frac{1}{(2\pi)^{1/2}}\int_{-\infty}^{\infty}\sum_{k=1}^{\infty}\widehat{u_k}(\rho)\phi_k(x)e^{it\rho}\,d\rho,$$
where
$$u_k(t) = \int_{\Omega}u(t,x)\phi_k(x)\,dx\qquad t\in\mathbb{R}$$
and $\widehat{u_k}(\rho)$ is the Fourier transform of $u_k(t)$ with respect to the variable $t\in\mathbb{R}$:
$$\widehat{u_k}(\rho)=\frac{1}{(2\pi)^{1/2}}\int_{-\infty}^\infty u_k(t)e^{-i\rho t}\,dt\qquad\rho\in\mathbb{R}.$$
Then
$$-Lu = \frac{1}{\sqrt{2\pi}}\int_{-\infty}^{\infty}\sum_{k=1}^{\infty}(i\rho + \lambda_k)\widehat{u_k}(\rho)\phi_k(x)e^{it\rho}d\rho.$$
The semigroup generated by $L$ in $L^2(\mathbb{R}\times\Omega)$ is given by
$$e^{\tau L}u(t,x)=
\sum_{k=1}^\infty e^{-t\lambda_k}u_k(t-\tau)\phi_k(x).$$
The semigroup is exponentially stable on $L^2(\mathbb{R}\times\Omega)$. In particular, $0\in\rho(L)$. Another way of proving this is as follows. For $f\in L^2(\mathbb{R}\times \Omega)$, we can choose
$$u(t,x)=\frac{1}{(2\pi)^{1/2}}\int_{-\infty}^{\infty}\sum_{k=1}^{\infty}\frac{1}{i\rho + \lambda_k}\widehat{f_k}(\rho)\phi_k(x)e^{it\rho}\,d\rho$$ 
as a solution to $-Lu=f$, and uniqueness follows from the Dirichlet boundary condition and the parabolic maximum principle.

The fractional power operators $(-L)^{\pm s}$, $s>0$ are defined in the domain
$$D((-L)^{\pm s}):= \Big\{ u\in L^2(\mathbb{R}\times \Omega) : \int_{\mathbb{R}}\sum_{k=1}^\infty|i\rho + \lambda_k|^{\pm 2s}|\widehat{u_k}(\rho)|^2 d\rho < \infty \Big\}$$
by means of
$$(-L)^{\pm s}u(t,x) = \frac{1}{\sqrt{2\pi}}\int_{-\infty}^{\infty}\sum_{k=1}^{\infty}(i\rho + \lambda_k)^{\pm s}\widehat{u_k}(\rho)\phi_k(x)e^{it\rho}d\rho.$$
Semigroup and pointwise formulas can be deduced for these operators, see \cite{B-S-1, B-DLC-S}.

The logarithm of $-L$ is defined using the principal branch of the complex logarithm. Define the domain
$$D(\log(-L))=\Big\{ u\in L^2(\mathbb{R}\times \Omega) : \int_{-\infty}^\infty\sum_{k=1}|\log(i\rho + \lambda_k)|^2|\widehat{u_k}(\rho)|^2 \,d\rho < \infty \Big\},$$
Then, in $L^2(\mathbb{R}\times\Omega)$,
$$\log(-L)u(t,x)= \frac{1}{(2\pi)^{1/2}}\int_{-\infty}^\infty\sum_{k=1}^{\infty}\log(i\rho+\lambda_k)
\widehat{u_k}(\rho)\phi_k(x)e^{it\rho}\,d\rho.$$
Again, semigroup and pointwise formulas for this operator can be found using the results of Section \ref{section:log} and the heat kernel associated to $\operatorname{div}(a(x)\nabla)$.
    
\subsection{The discrete Laplacian}

This example follows the work in \cite{MR3787555} in dimension $n=1$. The generalization to dimensions $n\geq2$ follows the same pattern, see \cite{MR3666807}.

We consider a mesh of size $h>0$ on $\mathbb{Z}$ given by $\mathbb{Z}_h = \{hn : n\in \mathbb{Z}\} $. The action of the discrete Laplacian $\Delta_h$ on this mesh is given, for a sequence $u:\mathbb{Z}_h \rightarrow \mathbb{R}$, by
$$\Delta_h u_j = \frac{1}{h^2}(u_{j+1}-2u_j + u_{j-1}),$$
where $u_j=u(hj)$ is the value of $u$ at the mesh point $hj$.
The solution to the semidiscrete heat equation with initial condition $u$,
\[ \begin{cases} 
      \partial_tw_j(t) = \Delta_hw_j(t) & \hbox{in}~\mathbb{Z}_h \times (0,\infty) \\
      w_j(0) =u_j & \hbox{on}~\mathbb{Z}_h \\
      
   \end{cases}
\]
is given by convolution with the semidiscrete heat kernel as
$$w_j(t)=e^{t\Delta_h}u_j=\sum_{m\in\mathbb{Z}}G(j-m,\tfrac{t}{h^2})u_m,$$
where $$G(m,t)=e^{-2t}I_m(2t) \qquad \hbox{for}~ m\in \mathbb{Z}$$
and
$$I_j(t) = \sum_{m=0}^{\infty}\frac{1}{m!\Gamma(m+j+1)}\Big(\frac{t}{2}\Big)^{2m+j}$$
is the modified Bessel function of order $j$. It is clear that $I_0(0)=1$ and $I_k(0)=0$ for $k\not= 0$, and $$\sum_{m\in\mathbb{Z}}e^{-2m}I_m(2t) =1.$$ for $t>0$.  These facts, along with the discrete Young's convolution inequality, imply that $\{e^{t\Delta_h}\}_{t\geq0}$ defines a uniformly bounded $C_0$-semigroup on $\ell^p(\mathbb{Z}_h)$ for any $1\leq p\leq\infty$, see \cite{MR3666807}.
Considered as an operator on $\ell^2(\mathbb{Z}_h)$, $0\not \in \rho(\Delta_h),$ since $$\widehat{\Delta_hu}(\xi)= -\frac{4}{h^2}\sin^2\Big(\frac{h\xi}{2}\Big)\widehat{u}(\xi)\qquad \hbox{for}~\xi\in[-\pi,\pi].$$

For $0<s<1$ the fractional discrete Laplacian $(-\Delta_h)^s$ can be defined via the semigroup formula on the domain $$\ell_s:=\Bigg\{u: \mathbb{Z}_h\rightarrow \mathbb{R}: \|u\|_{\ell_s}:= \sum_{m\in\mathbb{Z}} \frac{|u_m|}{(1+|m|)^{1+2s}}<\infty\Bigg\}$$
as
\begin{align*}
(-\Delta_h)^su_j&= \frac{1}{\Gamma(-s)}\int_0^{\infty}(e^{t\Delta_h}u_j - u_j)\frac{dt}{t^{1+s}}\\
&=\frac{1}{h^{2s}}\sum_{m\not=j}(u_j-u_m)K_s(j-m),
\end{align*}
where $$K_s(m)=\frac{1}{|\Gamma(-s)|}\int_0^{\infty}G(m,t)\frac{dt}{t^{1+s}}.$$
We can also define $\log(-\Delta_h)$ via the semigroup formula:
$$\log(-\Delta_h)u_j=\int_0^{\infty}\frac{e^{-t}u_j-e^{t\Delta_h}u_j}{t}dt.$$

\subsection{The Laplacian on $L^{p}(\mathbb{R}^n)$}

Fix $1<p<\infty$.  We consider the Laplacian $\Delta$ on $L^p(\mathbb{R}^n)$ with domain $D(\Delta)$ the Sobolev space $W^{2,p}(\mathbb{R}^n)$. Then $\Delta$ is the infinitesimal generator of the classical heat semigroup
$$e^{t\Delta}u(x)=\frac{1}{(4\pi t)^{n/2}}\int_{\mathbb{R}^n}e^{-|x-y|^2/4t}u(y)\,dy\qquad\hbox{in}~L^p(\mathbb{R}^n)$$
which is a uniformly bounded $C_0$-semigroup. Furthermore, the spectrum of the Laplacian in $L^p(\mathbb{R}^n)$ is $\sigma(-\Delta) = [0,\infty)$. See, for example, \cite[Theorem 2.3.3]{martinez2001fractional}.

The fractional powers of the Laplacian on $L^{p}(\mathbb{R}^n)$ are defined using the semigroup formula as in \cite{MR220096}. For example, for $0<s<1$,
$$(-\Delta)^su(x) = \lim_{\varepsilon\rightarrow 0}\frac{1}{\Gamma(-s)}\int_{\varepsilon}^{\infty}(e^{t\Delta}u(x)-u(x))\,\frac{dt}{t^{1+s}}\qquad\hbox{in}~L^p(\mathbb{R}^n).$$
It turns out that the domain of $(-\Delta)^s$ in $L^p(\mathbb{R}^n)$ is the Bessel potential space $(I-\Delta)^{-s}(L^p(\mathbb{R}^n))$, see \cite[Theorem 7.16]{samko2001hypersingular}. The logarithm of the Laplacian is also defined by the semigroup formula
$$\log(-\Delta)u = \lim_{s\rightarrow 0}\frac{1}{\Gamma(1+s)}\int_0^{\infty}\frac{e^{-t}u - e^{t\Delta}u}{t}\,dt$$
with domain the set of functions $u$ for which the limit exists in $L^p(\mathbb{R}^n)$.

\subsection{The left derivative on $L^p(\mathbb{R})$}

This example follows the works \cite{MR3456835,STINGA2020111505}. We consider the left derivative operator
$$D_{\mathrm{left}}u(t)=\lim_{\tau\to0^+}
\frac{u(t-\tau)-u(t)}{\tau}$$
on $L^p(\mathbb{R})$, $1<p<\infty$, with domain the Sobolev space $W^{1,p}(\mathbb{R})$. In this case $0\not \in \rho(D_{\text{left}})$. To see this, let 
$$u_n(x) = \frac{1}{(2n)^{1/p}}e^{-|x|/np}.$$ Then, $\|u_n\|_{L^p(\mathbb{R})}=1,$ but
$$\lim_{n\to \infty}\|D_{\mathrm{left}}u_n\|_{L^p(\mathbb{R})} =0,$$ so the inverse of $D_{\mathrm{left}}$ cannot be bounded (see \cite{martinez2001fractional}). The left derivative is the infinitesimal generator of the semigroup of left translations
$$e^{\tau D_\mathrm{left}}u(t)=u(t-\tau).$$
It is clear that $\{e^{\tau D_{\mathrm{left}}}\}_{\tau\geq0}$ defines a uniformly bounded $C_0$-semigroup on $L^p(\mathbb{R})$.

The pointwise formula for the fractional power of the left derivative, or the Fourier--Marchaud left fractional derivative, for $0<s<1$, follows easily from the semigroup formula:
$$(-D_{\mathrm{left}})^su(x) =
\frac{1}{\Gamma(-s)}\int_{-\infty}^x\frac{u(\tau)-u(t)}{(t-\tau)^{1+s}}\,dt.$$
Similarly, one can define the Marchaud right fractional derivative by considering the semigroup of right translations.

We define $\log(-D_{\mathrm{left}})$ as 
$$\log(-D_{\mathrm{left}})u(t) = \lim_{s\rightarrow 0}\frac{1}{\Gamma(1-s)}\int_0^{\infty}\frac{e^{-\tau}u(t)-e^{-\tau D_{\mathrm{left}}}u(t)}{\tau^{1+s}}\,d\tau$$
where $u\in D(-\log(D_{\mathrm{left}}))$ if and only if the limit above exists.

\subsection{The heat operator}

We consider the heat operator
$$Lu =-\partial_tu+\Delta u$$
for functions $u(t,x)$ in $L^2(\mathbb{R}^{n+1})$, $t\in\mathbb{R}$, $x\in\mathbb{R}^n$, as in \cite{StingaTorrea2017}.
We have the Fourier transform identity
$$\widehat{-Lu}(\rho,\xi) =  (i\rho + |\xi|^2)\widehat{u}(\rho, \xi)$$ 
for $\rho \in \mathbb{R}$ and $\xi \in \mathbb{R}^n$. In particular, $0 \not \in \rho(L)$. The semigroup generated by $L$ is given by
$$e^{\tau L}u(t,x) =\frac{1}{(4\pi \tau)^{n/2}}\int_{\mathbb{R}^n}e^{-|y|^2/(4\tau)}u(t-\tau,x-y)\,dy.$$
The Fourier transform shows that $\{e^{\tau L}\}_{\tau\geq0}$ defines a uniformly bounded $C_0$-semigroup on $L^2(\mathbb{R}^{n+1})$.

For $0<s<1$, the fractional powers $(-L)^s$ are defined on the domain
$$D((-L)^s)=\big\{ u(t,x)\in L^2(\mathbb{R}^{n+1}) : (i\rho + |\xi|^2)^s\widehat{u}(\rho,\xi)\in L^2(\mathbb{R}^{n+1})\big\}$$ with the semigroup formula,
$$(-L)^su(t,x) = \frac{1}{\Gamma(-s)}
\int_{0}^{\infty} \big(e^{\tau L}u(x,t) - u(x,t)\big)\,\frac{d\tau}{\tau^{1+s}}.$$
The logarithm of $-L$ is also defined via the semigroup formula
$$\log(-L)u(t,x) = \lim_{s\rightarrow 0}\frac{1}{\Gamma(1-s)}\int_0^{\infty}\frac{e^{-\tau}u(t,x)-e^{\tau L}u(t,x)}{\tau^{1+s}}\,d\tau$$
whenever the limit exists.

%    Text of article.

%    Bibliographies can be prepared with BibTeX using amsplain,
%    amsalpha, or (for "historical" overviews) natbib style.
\bibliographystyle{amsplain}
%    Insert the bibliography data here.
\bibliography{reference.bib}

\def\ocirc#1{\ifmmode\setbox0=\hbox{$#1$}\dimen0=\ht0 \advance\dimen0
  by1pt\rlap{\hbox to\wd0{\hss\raise\dimen0
  \hbox{\hskip.2em$\scriptscriptstyle\circ$}\hss}}#1\else {\accent"17 #1}\fi}
  \def\cprime{$'$} \def\cprime{$'$}
  \def\ocirc#1{\ifmmode\setbox0=\hbox{$#1$}\dimen0=\ht0 \advance\dimen0
  by1pt\rlap{\hbox to\wd0{\hss\raise\dimen0
  \hbox{\hskip.2em$\scriptscriptstyle\circ$}\hss}}#1\else {\accent"17 #1}\fi}
\providecommand{\bysame}{\leavevmode\hbox to3em{\hrulefill}\thinspace}
\providecommand{\MR}{\relax\ifhmode\unskip\space\fi MR }
% \MRhref is called by the amsart/book/proc definition of \MR.
\providecommand{\MRhref}[2]{%
  \href{http://www.ams.org/mathscinet-getitem?mr=#1}{#2}
}
\providecommand{\href}[2]{#2}
\begin{thebibliography}{10}

\bibitem{MR1039339}
Shmuel Agmon, \emph{A representation theorem for solutions of the {H}elmholtz
  equation and resolvent estimates for the {L}aplacian}, Analysis, et cetera,
  Academic Press, Boston, MA, 1990, pp.~39--76. \MR{1039339}

\bibitem{MR466902}
Shmuel Agmon and Lars H\"{o}rmander, \emph{Asymptotic properties of solutions
  of differential equations with simple characteristics}, J. Analyse Math.
  \textbf{30} (1976), 1--38. \MR{466902}

\bibitem{Allen-Caffarelli-Vasseur}
Mark Allen, Luis Caffarelli, and Alexis Vasseur, \emph{A parabolic problem with
  a fractional time derivative}, Arch. Ration. Mech. Anal. \textbf{221} (2017),
  no.~1, 603--630.

\bibitem{Antil_Control}
Harbir Antil and Enrique Ot\'{a}rola, \emph{A fem for an optimal control
  problem of fractional powers of elliptic operators}, SIAM J. Control Optim.
  \textbf{53} (2015), no.~6, 3432--3456.

\bibitem{ArendtSchaetzle2014}
Wolfgang Arendt and Reiner Sch{\"a}tzle, \emph{Semigroups generated by elliptic
  operators in non-divergence form on {$C_0(\Omega)$}}, Ann. Sc. Norm. Super.
  Pisa - Cl. Sci. \textbf{13} (2014), no.~2, 417--434.

\bibitem{ArendtTerElst1997}
Wolfgang Arendt and A.~F.~M. ter Elst, \emph{Gaussian estimates for second
  order elliptic operators with boundary conditions}, J. Oper. Theory.
  \textbf{38} (1997), no.~1, 87--130.

\bibitem{ACM}
Ioannis Athanasopoulos, Luis Caffarelli, and Emmanouil Milakis, \emph{On the
  regularity of the non-dynamic parabolic fractional obstacle problem}, J.
  Differ. Equ. \textbf{265} (2018), no.~6, 2614--2647.

\bibitem{MR220096}
H.~Berens, P.~L. Butzer, and U.~Westphal, \emph{Representations of fractional
  powers of infinitesimal generators of semigroups}, Bull. Amer. Math. Soc.
  \textbf{74} (1968), 191--196. \MR{220096}

\bibitem{BRR}
H~Berestycki, J~M Roquejoffre, and L~Rossi, \emph{The influence of a line with
  fast diffusion on fisher-kpp propagation}, J. Math. Biol. \textbf{66} (2013),
  no.~1, 743--766.

\bibitem{MR3456835}
Ana Bernardis, Francisco~J. Mart\'{\i}n-Reyes, Pablo~Ra\'{u}l Stinga, and
  Jos\'{e}~L. Torrea, \emph{Maximum principles, extension problem and inversion
  for nonlocal one-sided equations}, J. Differential Equations \textbf{260}
  (2016), no.~7, 6333--6362. \MR{3456835}

\bibitem{B-S-1}
A~Biswas and P~R Stinga, \emph{Regularity estimates for nonlocal space-time
  master equations in bounded domains}, J. Evol. Equ. (2020), no.~1.

\bibitem{B-DLC-S}
Animesh Biswas, Marta De~Le\'{o}n-Contreras, and Pablo~Ra\'{u}l Stinga,
  \emph{Harnack inequalities and hölder estimates for master equations}, SIAM
  J. Math. Anal. \textbf{53} (2021), no.~2, 2319--2348.

\bibitem{B-S}
Animesh Biswas and Pablo~Raúl Stinga, \emph{Sharp extension problem
  characterizations for higher fractional power operators in banach spaces}, J.
  Funct. Anal. \textbf{287} (2024), no.~3, 110474.

\bibitem{bonito2018numerical}
Andrea Bonito, Juan~Pablo Borthagaray, Ricardo~H Nochetto, Enrique Ot{\'a}rola,
  and Abner~J Salgado, \emph{Numerical methods for fractional diffusion},
  Comput. Vis. Sci. \textbf{19} (2018), no.~5, 19--46.

\bibitem{boyadzhiev1994logarithms}
Khristo~N. Boyadzhiev, \emph{Logarithms and imaginary powers of operators on
  hilbert spaces}, Collect. Math. \textbf{45} (1994), no.~3, 287--300.

\bibitem{Caffa-Silv}
L~A Caffarelli and L~Silvestre, \emph{An extension problem related to the
  fractional laplacian}, Comm. Partial Diff. Eq. \textbf{32} (2007), no.~1,
  1245--1260.

\bibitem{Caffarelli-Silvestre-Master}
\bysame, \emph{H\"older regularity for generalized master equations with rough
  kernels}, Advances in analysis: the legacy of Elias M. Stein, Princeton Math.
  Ser. \textbf{50} (2014), no.~1, 63--83.

\bibitem{Stinga-Caffa}
L~A Caffarelli and P~R Stinga, \emph{Fractional elliptic equations, caccioppoli
  estimates and regularity}, Ann. Inst. H. Poincar\'e Anal. Non Lin\'eaire
  \textbf{33} (2016), no.~1, 767--807.

\bibitem{CaffarelliSilvestre2009Regularity}
Luis Caffarelli and Luis Silvestre, \emph{Regularity theory for fully nonlinear
  integro-differential equations}, Comm. Pure Appl. Math. \textbf{62} (2009),
  no.~5, 597--638.

\bibitem{CaffarelliSilvestre2011EvansKrylov}
\bysame, \emph{The evans--krylov theorem for nonlocal fully nonlinear
  equations}, Ann. of Math. \textbf{174} (2011), no.~2, 1163--1187.

\bibitem{caffarelli2011regularity}
\bysame, \emph{Regularity results for nonlocal equations by approximation},
  Arch. Ration. Mech. Anal. \textbf{200} (2011), no.~1, 59--88.

\bibitem{CHEN2024110470}
Huyuan Chen and Laurent Véron, \emph{The cauchy problem associated to the
  logarithmic laplacian with an application to the fundamental solution}, J.
  Funct. Anal. \textbf{287} (2024), no.~3, 110470.

\bibitem{Chen-Weth}
Huyuan Chen and Tobias Weth, \emph{The dirichlet problem for the logarithmic
  laplacian}, Comm. Partial Diff. Eq. \textbf{44} (2019), no.~11, 1100--1139.

\bibitem{cheng2022equivalence}
Xinyu Cheng, Dong Li, and Wen Yang, \emph{On the equivalence of classical
  helmholtz equation and fractional helmholtz equation with arbitrary order},
  Commun. Contemp. Math. \textbf{24} (2022), no.~10, 2250036.

\bibitem{CHM}
Laurence Cherfils, Hussein Fakih, and Alain Miranville, \emph{A
  {C}ahn-{H}illiard system with a fidelity term for color image inpainting}, J.
  Math. Imaging Vision \textbf{54} (2016), no.~1, 117--131. \MR{3440241}

\bibitem{MR3666807}
\'Oscar Ciaurri, T.~Alastair Gillespie, Luz Roncal, Jos\'e{}~L. Torrea, and
  Juan~Luis Varona, \emph{Harmonic analysis associated with a discrete
  {L}aplacian}, J. Anal. Math. \textbf{132} (2017), 109--131. \MR{3666807}

\bibitem{MR3787555}
\'{O}scar Ciaurri, Luz Roncal, Pablo~Ra\'{u}l Stinga, Jos\'{e}~L. Torrea, and
  Juan~Luis Varona, \emph{Nonlocal discrete diffusion equations and the
  fractional discrete {L}aplacian, regularity and applications}, Adv. Math.
  \textbf{330} (2018), 688--738. \MR{3787555}

\bibitem{DL}
G~Duvaut and J~L Lions, \emph{Inequalities in mechanics and physics},
  Springer-Verlag, 1976.

\bibitem{dyda2026dirichlet}
Bart{\l}omiej Dyda, Sven Jarohs, and Firoj Sk, \emph{The dirichlet problem for
  the logarithmic {$p$}-laplacian}, Trans. Amer. Math. Soc. \textbf{379}
  (2026), no.~4, 2717--2779.

\bibitem{fall2016liouville}
Mouhamed~Moustapha Fall and Tobias Weth, \emph{Liouville theorems for a general
  class of nonlocal operators}, Potential Anal. \textbf{45} (2016), no.~1,
  187--200.

\bibitem{Gale}
J~E Gale, P~J Miana, and P~R Stinga, \emph{Extension problem and fractional
  operators: semigroups and wave equations}, J.~Evol.~Equ. \textbf{13} (2013),
  343--386.

\bibitem{Ghosh02122017}
Tuhin Ghosh, Yi-Hsuan Lin, and Jingni Xiao, \emph{The calderón problem for
  variable coefficients nonlocal elliptic operators}, Comm. Partial Diff. Eq.
  \textbf{42} (2017), no.~12, 1923--1961.

\bibitem{gilbarg2001elliptic}
David Gilbarg and Neil~S. Trudinger, \emph{Elliptic partial differential
  equations of second order}, 2 ed., Classics in Mathematics, Springer, Berlin,
  Heidelberg, 2001.

\bibitem{GO2}
Guy Gilboa and Stanley Osher, \emph{Nonlocal linear image regularization and
  supervised segmentation}, Multiscale Model. Simul. \textbf{6} (2007), no.~2,
  595--630.

\bibitem{GO}
Guy Gilboa and Stanley Osher, \emph{{Nonlocal Operators with Applications to
  Image Processing }}, Multiscale Model. Simul. \textbf{7} (2008), no.~3,
  1005--1028.

\bibitem{Guan-Murugan}
Vincent Guan, Mathav Murugan, and Juncheng Wei, \emph{Helmholtz solutions for
  the fractional laplacian and other related operators}, Commun. Contemp. Math.
  \textbf{25} (2023), no.~02, 2250016.

\bibitem{hughes1980logarithm}
Rhonda~J. Hughes, \emph{On the logarithm of an abstract potential operator},
  Indiana Univ. Math. J. \textbf{29} (1980), no.~3, 447--454.

\bibitem{JAROHS2020108732}
Sven Jarohs, Alberto Saldaña, and Tobias Weth, \emph{A new look at the
  fractional poisson problem via the logarithmic laplacian}, J. Funct. Anal.
  \textbf{279} (2020), no.~11, 108732.

\bibitem{lee2026fundamental}
David Lee, \emph{Fundamental solutions of the logarithmic laplacian: An
  approach via the division problem}, Proc. Amer. Math. Soc. \textbf{154}
  (2026), 3479--3494.

\bibitem{MR930604}
Celso Mart\'inez, Miguel Sanz, and Luis Marco, \emph{Fractional powers of
  operators}, J. Math. Soc. Japan \textbf{40} (1988), no.~2, 331--347.
  \MR{930604}

\bibitem{martinez2001fractional}
Celso Mart{\'\i}nez~Carracedo and Miguel Sanz~Alix, \emph{The theory of
  fractional powers of operators}, North-Holland Mathematics Studies, vol. 187,
  Elsevier Science, 2001.

\bibitem{Nocheto}
R~H Nochetto, E~Ot\'arola, and A~B Salgado, \emph{A pde approach to fractional
  diffusion in general domains: A priori error analysis}, Found. Comput. Math.
  \textbf{15} (2015).

\bibitem{nollau1969logarithmus}
Volker Nollau, \emph{{\"U}ber den logarithmus abgeschlossener operatoren in
  banachschen r{\"a}umen}, Acta Sci. Math. \textbf{30} (1969), no.~3-4,
  161--174.

\bibitem{MR261381}
\bysame, \emph{\"uber den {L}ogarithmus abgeschlossener {O}peratoren in
  {B}anachschen {R}\"aumen}, Acta Sci. Math. (Szeged) \textbf{30} (1969),
  161--174. \MR{261381}

\bibitem{Ouhabaz2005}
El-Maati Ouhabaz, \emph{Analysis of heat equations on domains}, London
  Mathematical Society Monographs, no.~31, Princeton University Press,
  Princeton, NJ, 2005.

\bibitem{pazy1983semigroups}
Amnon Pazy, \emph{Semigroups of linear operators and applications to partial
  differential equations}, Applied Mathematical Sciences, vol.~44,
  Springer-Verlag, New York-Berlin-Heidelberg-Tokyo, 1983.

\bibitem{Rosenberg1997}
Steven Rosenberg, \emph{The laplacian on a riemannian manifold: An introduction
  to analysis on manifolds}, London Mathematical Society Student Texts,
  vol.~31, Cambridge University Press, Cambridge, 1997.

\bibitem{samko2001hypersingular}
Stefan~G. Samko, \emph{Hypersingular integrals and their applications}, CRC
  Press, 2001.

\bibitem{Stinga-Torrea-SIAM}
P~R Stinga and J~L Torrea, \emph{Regularity theory and extension problem for
  fractional nonlocal parabolic equations and the master equation}, SIAM J.
  Math. Anal. \textbf{49} (2017), no.~1, 3893--3924.

\bibitem{StingaTorrea2017}
Pablo~Ra{\'u}l Stinga and Jos{\'e}~L. Torrea, \emph{Regularity theory and
  extension problem for fractional nonlocal parabolic equations and the master
  equation}, SIAM J. Math. Anal. \textbf{49} (2017), no.~5, 3893--3924.

\bibitem{MR4914523}
Pablo~Ra\'{u}l Stinga and Mary Vaughan, \emph{Interior {S}chauder estimates for
  fractional elliptic equations in nondivergence form}, SIAM J. Math. Anal.
  \textbf{57} (2025), no.~3, 2833--2879. \MR{4914523}

\bibitem{STINGA2020111505}
Pablo~Raúl Stinga and Mary Vaughan, \emph{One-sided fractional derivatives,
  fractional laplacians, and weighted sobolev spaces}, Nonlinear Anal.
  \textbf{193} (2020), 111505.

\bibitem{STINGA2021245}
Pablo~Raúl Stinga and Mary Vaughan, \emph{Fractional elliptic equations in
  nondivergence form: definition, applications and harnack inequality}, J.
  Math. Pures Appl. \textbf{156} (2021), 245--306.

\bibitem{vanNeerven_2022}
Jan van Neerven, \emph{Functional analysis}, Cambridge Studies in Advanced
  Mathematics, Cambridge University Press, 2022.

\bibitem{weilenmann1978continuity}
J{\"u}rg Weilenmann, \emph{Continuity properties of fractional powers, of the
  logarithm, and of holomorphic semigroups}, J. Funct. Anal. \textbf{27}
  (1978), no.~1, 1--20.

\bibitem{yoshikawa1973logarithm}
Atsushi Yoshikawa, \emph{On the logarithm of closed linear operators}, Proc.
  Jpn. Acad. Ser. A Math. Sci. \textbf{49} (1973), no.~3, 169--173.

\end{thebibliography}

\end{document}